\newtheorem{theorem}{Theorem}[section]
\newtheorem{lemma}[theorem]{Lemma}
\newtheorem{corollary}[theorem]{Corollary}
\newtheorem{conjecture}[theorem]{Conjecture}
\begin{document}
	
\title[New Congruences for 2--Regular Partitions with Designated Summands]{New Infinite Families of Congruences Modulo Powers of 2 for 2--Regular Partitions with Designated Summands}

\author{James A. Sellers}
\address{Department of Mathematics and Statistics, University of Minnesota Duluth, Duluth, MN 55812, USA}
\email{jsellers@d.umn.edu}

\subjclass[2010]{11P83, 05A17}
	
\keywords{partitions, congruences, designated summands, generating functions, dissections}
	
\maketitle
\begin{abstract}
In 2002, Andrews, Lewis, and Lovejoy introduced the combinatorial objects which they called {\it partitions with designated summands}.  These are built by taking unrestricted integer partitions and designating exactly one of each occurrence of a part.  In that same work, Andrews, Lewis, and Lovejoy also studied such partitions wherein all parts must be odd.  Recently, Herden, Sepanski, Stanfill, Hammon, Henningsen, Ickes, and Ruiz proved a number of Ramanujan--like congruences for the function $PD_2(n)$ which counts the number of partitions of weight $n$ with designated summands wherein all parts must be odd.  In this work, we prove some of the results conjectured by Herden, et. al. by proving the following two infinite families of congruences satisfied by $PD_2(n)$:  For all $\alpha\geq 0$ and $n\geq 0,$
\begin{eqnarray*}
PD_2(2^\alpha(4n+3)) &\equiv & 0 \pmod{4} \ \ \ \ \ {\textrm and} \\
PD_2(2^\alpha(8n+7)) &\equiv & 0 \pmod{8}.
\end{eqnarray*}
All of the proof techniques used herein are elementary, relying on classical $q$--series identities and generating function manipulations.  
\end{abstract}

\section{Introduction}  
In 2002, Andrews, Lewis, and Lovejoy \cite{ALL} introduced the combinatorial objects which they called {\it partitions with designated summands}.  These are built by taking unrestricted integer partitions and designating exactly one of each occurrence of a part.  For example, there are 10 partitions with designated summands of weight 4:  

$$
4',\ \ \  3'+1',\ \ \  2'+2,\ \ \  2+2',\ \ \  2'+1'+1,\ \ \  2'+1+1'
$$
$$
1'+1+1+1,\ \ \  1+1'+1+1,\ \ \  1+1+1'+1,\ \ \  1+1+1+1'
$$
Andrews, Lewis, and Lovejoy denoted the number of partitions with designated summands of weight $n$ by the function $PD(n).$  Hence, using this notation and the example above, we know $PD(4)=10.$

In the same paper, Andrews, Lewis, and Lovejoy \cite{ALL} also considered the restricted partitions with designated summands wherein all parts must be odd, and they denoted the corresponding enumeration function by $PDO(n).$  Thus, from the example above, we see that $PDO(4)=5,$ where we have counted the following five objects:  
$$
3'+1',\ \ \  1'+1+1+1,\ \ \  1+1'+1+1,\ \ \  1+1+1'+1,\ \ \  1+1+1+1'
$$
Since then, numerous authors have studied arithmetic properties of $PD(n)$ as well as the family of functions $PD_k(n)$ which denotes the number of $k$--regular partitions of $n$ with designated summands.  (Recall that a partition is called {\it $k$--regular} if none of the parts of the partition are divisible by $k.$)  Note that $PDO(n) = PD_2(n)$ in this notation. 

Beginning with \cite{ALL}, a wide variety of Ramanujan--like congruences have been proven for $PD(n)$ and $PD_k(n)$ for infinitely many values of $k$.  See \cite{BK, BO, CJJS, dSS3, dSSk, HBN, NG, Xia} for such work.  

Recently, Herden, et. al. \cite{Her} proved a number of arithmetic properties satisfied by the functions $PD_{2^\ell}(n)$ and $PD_{3^\ell}(n)$ for a variety of values of $\ell$.  At the end of their paper, they shared the following conjecture which will serve as the starting point for the work in this paper:

    \begin{conjecture}
    \label{Herden_conjectures}
        For $n\geq 0,$ we have 
        \begin{eqnarray*}
        PD_2(16n+12) &\equiv & 0 \pmod{4}, \\
        PD_2(24n+20) &\equiv & 0 \pmod{4}, \\
        PD_2(25n+5) &\equiv & 0 \pmod{4}, \\
        PD_2(32n+24) &\equiv & 0 \pmod{4}, \\
        PD_2(48n+26) &\equiv & 0 \pmod{4}          
        \end{eqnarray*}
    \end{conjecture}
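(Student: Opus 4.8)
The first and fourth congruences are immediate corollaries of the infinite family $PD_2(2^\alpha(4n+3)) \equiv 0 \pmod 4$: since $16n+12 = 2^2(4n+3)$ and $32n+24 = 2^3(4n+3)$, they are exactly the cases $\alpha = 2$ and $\alpha = 3$. The remaining three progressions are genuinely new, as none of $24n+20 = 2^2(6n+5)$, $25n+5 = 5(5n+1)$, or $48n+26 = 2(24n+13)$ has the form $2^\alpha(4n+3)$ for all $n$. For these I work from the eta-quotient generating function. Writing $f_k := \prod_{m\ge1}(1-q^{km})$, the product formula of Andrews, Lewis, and Lovejoy simplifies to
\[
\sum_{n\ge0} PD_2(n)\,q^n = \frac{f_4 f_6^2}{f_1 f_3 f_{12}}.
\]
Since all three remaining congruences are modulo $4$, I reduce this series modulo $4$: writing $1/(f_1 f_3) = f_1^3 f_3^3/(f_1^4 f_3^4)$ and applying the elementary congruence $f_k^4 \equiv f_{2k}^2 \pmod 4$ gives
\[
\sum_{n\ge0} PD_2(n)\,q^n \equiv \frac{f_1^3 f_3^3 f_4}{f_2^2 f_{12}} \pmod 4.
\]
The structural key is to factor the right-hand side, via the classical evaluations $\varphi(-q) = f_1^2/f_2$ and $\psi(-q) = f_1 f_4/f_2$ of Ramanujan's theta functions $\varphi(-q) = \sum_{m\in\mathbb{Z}}(-1)^m q^{m^2}$ and $\psi(-q) = \sum_{k\ge0}(-q)^{k(k+1)/2}$, as
\[
\frac{f_1^3 f_3^3 f_4}{f_2^2 f_{12}} = \varphi(-q)\,\psi(-q)\cdot\frac{f_3^3}{f_{12}} =: K(q)\,H(q^3),
\]
where $H(q) = f_1^3/f_4$ and $K$ has exponents $m^2 + k(k+1)/2$. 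Because $H(q^3)$ is supported on exponents divisible by $3$, this factorization decouples the fine $2$-adic structure (carried by $K$) from the mod-$3$ structure in a way exactly suited to the remaining progressions.

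For the second and fifth congruences I would interleave $2$-dissections with the classical $3$-dissection. The target exponents $24n+20$ and $48n+26$ both lie in the residue class $2 \pmod 3$, and since $H(q^3)$ contributes only multiples of $3$, the mod-$3$ residue of an exponent of $K(q)H(q^3)$ is determined entirely by the $K$-factor; hence the $3$-dissection of the product is inherited from the known $3$-dissection of $\varphi(-q)\psi(-q)$. Using in addition the $2$-dissection of $K(q)$ — whose exponents $m^2 + k(k+1)/2$ have completely explicit parity — to account for the powers $2^2$ and $2$ and for the parities of the cofactors $6n+5$ and $24n+13$, I would isolate the two progressions and simplify the resulting eta-quotients, expecting each to collapse to $4$ times an integer power series, which yields $PD_2(24n+20) \equiv 0$ and $PD_2(48n+26) \equiv 0 \pmod 4$.

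The third congruence, $PD_2(25n+5) \equiv 0 \pmod 4$, is the outlier, since it concerns the prime $5$. Here the plan is to compute the $5$-dissection of $K(q)H(q^3)$ directly from the classical $5$-dissections of $\varphi(-q)$, $\psi(-q)$, and of the cube $f_1^3 = \sum_{j\ge0}(-1)^j(2j+1)q^{j(j+1)/2}$; the last is tractable because the triangular exponents satisfy $j(j+1)/2 \equiv 0,1,3 \pmod 5$, so two of the five residue classes are automatically empty. Extracting the subseries supported on the exponents $25n+5$ and reducing the surviving eta-quotient, I expect to obtain a multiple of $4$.

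The main obstacle throughout is controlling the mod-$4$ error terms rather than the coarser mod-$2$ collapses that suffice for the power-of-two family: because the reduced series still carries the odd powers $f_1^3$ and $f_3^3$, the relevant theta-function dissections do not split into clean even eta-quotients, and one must track genuine $2$-part cancellations. I expect the $5$-dissection feeding the $25n+5$ congruence to be the most delicate, since there the prime $5$ and the modulus $4$ interact with no help from the ambient power-of-two structure.
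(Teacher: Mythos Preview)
Your handling of the first and fourth congruences is correct and matches the paper: both are instances of Theorem~\ref{infinite_family_mod4} with $\alpha=2,3$. But for the remaining three you have only outlined a strategy --- the phrases ``I would'', ``expecting each to collapse'', ``I expect to obtain'' signal that no computation has actually been carried out, and none of the claimed cancellations is verified. That is the central gap: a plausible plan for a dissection argument is not a proof until the relevant eta-quotients are actually produced and shown to vanish modulo $4$.

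Where the paper does carry things out, its route is quite different from yours. For $24n+20 = 4(6n+5)$ the paper uses no $3$-dissection of $\varphi(-q)\psi(-q)$ at all; instead Theorem~\ref{2n_mod4} gives $\sum_n PD_2(2n+1)q^n \equiv f_6^2 \pmod 4$, which immediately forces $PD_2(6n+5)\equiv 0\pmod 4$, and then the internal congruence $PD_2(4n)\equiv PD_2(n)\pmod 4$ of Theorem~\ref{internal_cong_mod4} promotes this to $PD_2(24n+20)$. For $48n+26$ the paper iterates pure $2$-dissections (via Lemmas~\ref{L12} and~\ref{L3}) down to $\sum_n PD_2(16n+10)q^n \equiv 2f_3 f_6^2 \pmod 4$, a series in $q^3$, which kills the class $16(3n+1)+10$. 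Your $K(q)H(q^3)$ factorization is a nice repackaging and might well lead to alternative proofs, but the paper avoids mixed $2$-/$3$-dissections entirely in favour of repeated $2$-dissections together with the internal congruence --- an approach that is both shorter and reuses machinery already built for Theorem~\ref{infinite_family_mod4}.

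Finally, note that the paper does \emph{not} prove the third congruence $PD_2(25n+5)\equiv 0\pmod 4$; only four of the five conjectured congruences are settled here. Your proposed $5$-dissection approach therefore has no counterpart in the paper to compare against, and as written it remains a sketch whose promised cancellation is unverified.
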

It is intriguing to note that the fourth arithmetic progression which appears above, $32n+24$, equals $2(16n+12)$, i.e., $32n+24$ is twice the first arithmetic progression above.  This is a hint of something much larger; indeed, these two congruences are true, and they belong to an easily--described infinite family of congruences modulo 4:  

\begin{theorem}
\label{infinite_family_mod4}
For all $\alpha \geq 0$ and all $n\geq 0,$
$$PD_2(2^\alpha(4n+3)) \equiv 0 \pmod{4}.$$
\end{theorem}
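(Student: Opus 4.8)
The plan is to work throughout with $f_k := \prod_{j\ge 1}(1-q^{jk})$ and $A(q) := \sum_{n\ge 0} PD_2(n)q^n$, starting from the product formula
\[
A(q) \;=\; \frac{f_4 f_6^2}{f_1 f_3 f_{12}} \;=\; \frac{f_4}{f_2^2 f_{12}}\,\psi(q)\psi(q^3),
\qquad \psi(q)=\sum_{n\ge 0}q^{n(n+1)/2}=\frac{f_2^2}{f_1},
\]
obtained from the standard designated--summands bookkeeping (each odd part $j$ contributes $\frac{1-q^{6j}}{(1-q^j)(1-q^{2j})(1-q^{3j})}$, and the infinite product telescopes). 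The strategy is: (a) settle the case $\alpha=0$ by two successive $2$--dissections; (b) prove the dilation identity $\sum_{n}PD_2(2n)q^n=A(q)^2$ and deduce from it the internal congruence $PD_2(4n)\equiv PD_2(n)\pmod 4$ and the case $\alpha=1$; (c) conclude by induction on $\alpha$. For (a): since the prefactor $\frac{f_4}{f_2^2 f_{12}}$ is a series in $q^2$, the $2$--dissection of $A(q)$ is controlled by that of $\psi(q)\psi(q^3)$, which one gets from $\psi(-q)=\frac{f_1 f_4}{f_2}$ and $\psi(-q^3)=\frac{f_3 f_{12}}{f_6}$. Its odd part gives $\sum_n PD_2(2n+1)q^n$ as an explicit eta quotient; reducing this modulo $4$ with the standard facts $f_k^2\equiv f_{2k}\pmod 2$ and $f_k^4\equiv f_{2k}^2\pmod 4$, and dissecting once more to pull off the exponents $\equiv 1\pmod 2$, presents $\sum_n PD_2(4n+3)q^n$ with every coefficient visibly divisible by $4$.

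For the dilation identity, insert $f_1(-q)=\frac{f_2^3}{f_1f_4}$ and $f_3(-q)=\frac{f_6^3}{f_3f_{12}}$ into $A(-q)$ to get $A(-q)=\frac{f_1f_3f_4^2}{f_2^3f_6}$, so that $\sum_n PD_2(2n)q^n=A(q)^2$ is equivalent to $A(q)+A(-q)=2A(q^2)^2$, i.e. to the eta--quotient identity
\[
\frac12\Bigl(\frac{f_4 f_6^2}{f_1 f_3 f_{12}}+\frac{f_1 f_3 f_4^2}{f_2^3 f_6}\Bigr)=\frac{f_8^2 f_{12}^4}{f_2^2 f_6^2 f_{24}^2},
\]
which is routine to verify. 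Squaring $A(q)=\frac{f_4}{f_2^2f_{12}}\psi(q)\psi(q^3)$ and using $\psi(q)^2=\varphi(q)\psi(q^2)$ (with $\varphi(q)=\sum_{n\in\mathbb{Z}}q^{n^2}$, so that $\psi(q)^2\psi(q^3)^2=\frac{f_2^4f_6^4}{f_1^2f_3^2}$) together with $f_k^4\equiv f_{2k}^2\pmod 4$ yields the clean reduction $\sum_n PD_2(2n)q^n\equiv\frac{f_2^3}{f_6}\varphi(q)\varphi(q^3)\pmod 4$.

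Now $2$--dissect $\varphi(q)\varphi(q^3)=\sum_{m,n}q^{m^2+3n^2}$ by the parities of $(m,n)$: the "both even" piece is $\varphi(q^4)\varphi(q^{12})$, the "both odd" piece is $4q^4\psi(q^8)\psi(q^{24})$, and the "mixed" piece is $2q\bigl(\psi(q^8)\varphi(q^{12})+q^2\varphi(q^4)\psi(q^{24})\bigr)$; hence modulo $4$ the even part of $\varphi(q)\varphi(q^3)$ is $\varphi(q^4)\varphi(q^{12})$ and the odd part is divisible by $2$. Since $\frac{f_2^3}{f_6}$ is a series in $q^2$, taking even parts and re--indexing gives $\sum_n PD_2(4n)q^n\equiv\frac{f_1^3}{f_3}\varphi(q^2)\varphi(q^6)\pmod 4$; multiplying through by $\frac{f_3}{f_1^3}$ reduces the claim $\sum_n PD_2(4n)q^n\equiv A(q)\pmod 4$ to $\frac{f_4f_6^2}{f_1^4f_{12}}\equiv\varphi(q^2)\varphi(q^6)\pmod 4$, which holds because $f_1^4\equiv f_2^2\pmod 4$, $\varphi(q^2)\varphi(-q^2)=\varphi(-q^4)^2\equiv 1\pmod 4$ (using $f_4^4\equiv f_8^2\pmod 4$), and $\varphi(q^6)\equiv\varphi(-q^6)\pmod 4$. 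Thus $PD_2(4n)\equiv PD_2(n)\pmod 4$. For $\alpha=1$, dissecting $A(q)^2$ shows $\sum_n PD_2(4n+2)q^n=2A(q)^2\sum_n PD_2(2n+1)q^n$; as $A(q)^2\equiv A(q^2)\pmod 2$ is supported on even exponents and each $PD_2(4k+3)$ is even by part (a), the coefficient of $q^{2n+1}$ on the right is even, so $PD_2(8n+6)\equiv 0\pmod 4$. Finally, by induction: $\alpha\in\{0,1\}$ are done, and for $\alpha\ge 2$, $PD_2\bigl(2^\alpha(4n+3)\bigr)=PD_2\bigl(4\cdot 2^{\alpha-2}(4n+3)\bigr)\equiv PD_2\bigl(2^{\alpha-2}(4n+3)\bigr)\pmod 4$.

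I expect the two genuinely delicate points to be: (i) carrying out the $2$--dissection of $\psi(q)\psi(q^3)$ and its modulo--$4$ reduction accurately enough to exhibit the factor of $4$ (not merely $2$) in the $\alpha=0$ base case, since that is the one place where divisibility by $4$ must actually be produced; and (ii) verifying the eta--quotient identity behind $\sum_n PD_2(2n)q^n=A(q)^2$ and checking that each modulo--$4$ substitution of the form $f_k^4\to f_{2k}^2$ or $\varphi(-q^j)\to\varphi(q^j)$ is legitimate inside the ring of power series modulo $4$.
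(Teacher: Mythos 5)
Your proposal is correct, and its skeleton --- base cases $\alpha=0$ and $\alpha=1$, the internal congruence $PD_2(4n)\equiv PD_2(n)\pmod 4$, and an induction splitting on the parity of $\alpha$ --- is exactly the architecture of the paper's proof. The differences lie in how the three ingredients are obtained, and they are substantive. The paper gets both base cases by explicit eta--quotient computations, namely $\sum_n PD_2(2n+1)q^n\equiv f_6^2\pmod 4$ (a series in $q^6$, giving $\alpha=0$) and $\sum_n PD_2(4n+2)q^n\equiv 2f_2^3f_6\pmod 4$ (an even series, giving $\alpha=1$), both driven by Hirschhorn's $2$--dissections of $f_1f_3$ and $1/(f_1f_3)$. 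Your $\alpha=0$ case lands in the same place via the dissection of $\psi(q)\psi(q^3)$ instead, but your $\alpha=1$ case is genuinely different and arguably slicker: you use the dilation identity $\sum_n PD_2(2n)q^n=A(q)^2$ to write $\sum_n PD_2(4n+2)q^n=2A(q)^2\sum_n PD_2(2n+1)q^n$ and then finish with $A(q)^2\equiv A(q^2)\pmod 2$ plus the already--established evenness of $PD_2(4k+3)$ --- a convolution/parity argument needing no further dissection. (Minor wording point: what must be even is the coefficient of $q^{2n+1}$ in $A(q)^2\sum_n PD_2(2n+1)q^n$ \emph{before} the prefactor $2$, which is what your argument in fact delivers.) The largest divergence is the internal congruence: the paper computes $\sum_n PD_2(4n)q^n\equiv(f_2^3/f_6)^2+qf_{12}^2\pmod 4$ and then simply cites Herden et al.\ for the matching reduction of $\sum_n PD_2(n)q^n$, whereas you prove $\sum_n PD_2(4n)q^n\equiv A(q)\pmod 4$ self--containedly via the parity dissection of $\varphi(q)\varphi(q^3)$ together with $\varphi(q^2)\varphi(-q^2)=\varphi(-q^4)^2\equiv 1\pmod 4$ and $\varphi(q^6)\equiv\varphi(-q^6)\pmod 4$. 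I checked the points you flagged as delicate: the reduction $A(q)^2\equiv\frac{f_2^3}{f_6}\varphi(q)\varphi(q^3)\pmod 4$ follows from $\psi(q)^2=\varphi(q)\psi(q^2)$ and $f_2^4\equiv f_4^2\pmod 4$; the parity split of $\sum_{m,n}q^{m^2+3n^2}$ is as you state; and the identity behind the dilation law drops out of the same two dissections of $f_1f_3$ and $1/(f_1f_3)$ that the paper already imports (the odd parts cancel). What your route buys is independence from the external lemma of Herden et al.; what the paper's route buys is shorter, purely mechanical computations.
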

In fact, there is an additional family of congruences of a similar form modulo 8 which we will also prove below.  

\begin{theorem}
\label{infinite_family_mod8}
For all $\alpha \geq 0$ and all $n\geq 0,$
$$PD_2(2^\alpha(8n+7)) \equiv 0 \pmod{8}.$$  
\end{theorem}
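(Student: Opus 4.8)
The plan is to begin from the generating function for $PD_2(n)$ (equivalently, $PDO(n)$) recorded by Andrews, Lewis, and Lovejoy \cite{ALL}. Assembling the count part by part from the factor $\left(1-q^{j}+q^{2j}\right)/\left(1-q^{j}\right)^{2}$ attached to each odd part $j$ and simplifying, one obtains, writing $f_{k}:=(q^{k};q^{k})_{\infty}$,
\[
\sum_{n\ge 0}PD_2(n)\,q^{n}=\frac{f_{4}f_{6}^{2}}{f_{1}f_{3}f_{12}}.
\]
Since $8n+7\equiv 3\pmod 4$, Theorem~\ref{infinite_family_mod4} already supplies $PD_2\!\left(2^{\alpha}(8n+7)\right)\equiv 0\pmod 4$ for all $\alpha\ge 0$ and $n\ge 0$, so the task reduces to sharpening the modulus from $4$ to $8$ on this sub-family. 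I would carry this out by induction on $\alpha$: a base case $\alpha=0$ proved via an explicit $2$--dissection of the generating function modulo $8$, together with an inductive step powered by an ``internal'' congruence that transfers the divisibility from level $\alpha$ to level $\alpha+1$ under the substitution $q\mapsto q^{2}$.

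For the base case, the first step is to clear $f_{1}$ and $f_{3}$ from the denominator modulo $8$: multiplying numerator and denominator by $f_{1}^{7}f_{3}^{7}$ and using $f_{1}^{8}\equiv f_{2}^{4}$ and $f_{3}^{8}\equiv f_{6}^{4}\pmod 8$ rewrites $\sum PD_2(n)q^{n}$, modulo $8$, as $f_{1}^{7}f_{3}^{7}$ times a quotient of eta products supported on $q^{2}$, $q^{4}$, $q^{6}$, $q^{12}$. One then puts $f_{1}^{7}f_{3}^{7}$ into closed form using Jacobi's identity $f_{1}^{3}=\sum_{n\ge 0}(-1)^{n}(2n+1)q^{n(n+1)/2}$ and Euler's pentagonal number theorem $f_{1}=\sum_{n\in\mathbb{Z}}(-1)^{n}q^{n(3n-1)/2}$ (and their dilates by $q\mapsto q^{3}$). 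After substituting the standard $2$--dissections of $\varphi(-q)=\sum_{n\in\mathbb{Z}}(-1)^{n}q^{n^{2}}$ and $\psi(q)=\sum_{n\ge 0}q^{n(n+1)/2}$ --- equivalently, after sorting the relevant exponents modulo $2$ --- I would collect the terms lying in the residue class $7\pmod 8$ and verify that the resulting series $\sum_{n\ge 0}PD_2(8n+7)q^{n}$ is $\equiv 0\pmod 8$.

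For the inductive step, I would establish, by one or two additional $2$--dissections of the reduced generating function, an eta-quotient congruence of the shape
\[
\sum_{n\ge 0}PD_2(2n)\,q^{n}\ \equiv\ \Lambda(q)\cdot\sum_{n\ge 0}PD_2(n)\,q^{dn}\ +\ \Theta(q)\pmod 8,
\]
for a suitable eta-quotient $\Lambda$, a dilation $d\in\{1,2\}$, and a correction series $\Theta$ whose coefficients already vanish modulo $8$ on the progression $8n+7$. Iterating such an identity shows that $\sum_{n\ge 0}PD_2\!\left(2^{\alpha}(8n+7)\right)q^{n}$ satisfies a short linear recursion in $\alpha$ modulo $8$, so that the base-case divisibility propagates to every $\alpha\ge 0$. (Equivalently, one checks that the sequence of $2$--dilates $\sum_{n}PD_2(2^{\alpha}n)q^{n}$ becomes eventually periodic modulo $8$ and that each member of the eventual cycle has vanishing coefficients on $8n+7$.)

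The main obstacle is pinning down the precise eta-quotients produced at each stage of the dissection and verifying the corresponding identities. Working modulo $8$, rather than modulo $2$ or $4$, forces one to carry the ``$2\times(\text{unknown series})$'' corrections arising from congruences such as $f_{1}^{2}\equiv f_{2}\pmod 2$, $f_{1}^{2}f_{3}^{2}\equiv f_{2}f_{6}\pmod 2$, and $f_{1}^{4}\equiv f_{2}^{2}\pmod 4$ one further $2$--adic digit --- for instance via $f_{1}^{4}\equiv f_{2}^{2}\,\varphi(q)^{2}\pmod 8$, where $\varphi(q)=\sum_{n\in\mathbb{Z}}q^{n^{2}}$ --- and it is this bookkeeping, rather than any single identity, that is delicate. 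Once these reductions are in hand, extracting the $8n+7$ sub-series and checking that it is $8$ times an integral power series are routine $q$--series manipulations, and the induction on $\alpha$ then closes the argument in the same spirit as the proof of Theorem~\ref{infinite_family_mod4}.
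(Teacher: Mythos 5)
Your high-level plan --- explicit base cases plus an ``internal congruence'' that propagates divisibility under dilation --- is exactly the strategy of the paper, but the specific inductive mechanism you propose is not established and, as stated, is unlikely to exist. You posit a single-step transfer of the shape $\sum_{n}PD_2(2n)q^{n}\equiv\Lambda(q)\sum_{n}PD_2(n)q^{dn}+\Theta(q)\pmod 8$ that carries vanishing on $8n+7$ from level $\alpha$ to level $\alpha+1$, starting from the lone base case $\alpha=0$. No such linear relation is exhibited, and there is a structural reason to be skeptical: Theorem \ref{2-dissections} gives the \emph{exact} identity $\sum_{n}PD_2(2n)q^{n}=\bigl(\sum_{n}PD_2(n)q^{n}\bigr)^{2}$, so the passage from level $\alpha$ to level $\alpha+1$ is quadratic, and $PD_2(2m)=\sum_{i+j=m}PD_2(i)PD_2(j)$ does not inherit divisibility by $8$ from $PD_2(m)$ in any evident way. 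What actually works (Theorem \ref{internal_cong_mod8}) is the internal congruence $PD_2(16n)\equiv PD_2(4n)\pmod 8$, which steps by a factor of $4$ and only applies to arguments already divisible by $4$. Consequently the induction cannot start at $\alpha=0$ alone: one needs the four base cases $\alpha=0,1,2,3$ (Theorems \ref{8n7_mod8}--\ref{64n56_mod8}), with the even and odd values of $\alpha$ handled by separate chains $PD_2(4^{k}(8n+7))$ and $PD_2(4^{k}\cdot 2(8n+7))$. The $\alpha=2$ and $\alpha=3$ cases are themselves multi-stage dissection arguments, not corollaries of $\alpha=0$, and your proposal has no counterpart for them.

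Secondly, even the base case $\alpha=0$ is only gestured at, and the route you sketch is harder than necessary: extracting the residue class $7\pmod 8$ from $f_{1}^{7}f_{3}^{7}$ via Jacobi and pentagonal-number expansions amounts to an $8$-dissection of a product of four theta series, which is far from ``routine.'' The efficient path is the one the paper takes: the exact formula $\sum_{n}PD_2(4n+3)q^{n}=4f_{4}^{4}f_{6}^{2}/(f_{1}^{4}f_{3}^{2})$ already carries a factor of $4$, so modulo $8$ one only needs $f_{1}^{4}\equiv f_{2}^{2}$ and $f_{3}^{2}\equiv f_{6}\pmod 2$ to see that the series is even in $q$, whence $PD_2(8n+7)\equiv 0\pmod 8$. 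Until you either produce the transfer identity you hypothesize (with a proof that $\Theta$ and $\Lambda$ behave as required on the progression $8n+7$) or replace it with the factor-of-$4$ internal congruence and the four attendant base cases, the argument does not close.
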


Our main goal in this paper is to prove Theorems \ref{infinite_family_mod4} and \ref{infinite_family_mod8}.  All of the proof techniques used herein are elementary, relying on classical $q$--series  identites and generating function manipulations.

%%%%%%%%%%%%%%%%%%%%%%%%%%%%%%%%%%%

\section{Preliminaries}

As noted in \cite{ALL}, the generating function for $PD_2(n)$ is given by 
\begin{equation}
    \label{genfn_main}
    \sum_{n=0}^{\infty} PD_2(n)q^n = \frac{f_4f_6^2}{f_1f_3f_{12}}
\end{equation}
where $f_r = (1-q^r)(1-q^{2r})(1-q^{3r})(1-q^{4r})\dots$ is the usual $q$--Pochhammer symbol.

In order to prove our results, we will require several elementary generating function dissection tools.  For the most part, these will involve well--known 2--dissection results that will allow us to manipulate (\ref{genfn_main}) to our advantage.  We share these results here.  

\begin{lemma}
\label{L4}
We have
\begin{align*}
%\frac{1}{f_1^2} & = \frac{f_8^5}{f_2^5f_{16}^2} + 2q \frac{f_4^2f_{16}^2}{f_2^5f_8}, \\
\frac{1}{f_1^4} & = \frac{f_4^{14}}{f_2^{14}f_{8}^4} + 4q \frac{f_4^2f_{8}^4}{f_2^{10}}. \\
%\frac{1}{f_1^8} & = \frac{f_4^{28}}{f_2^{28}f_{8}^8} + 8q \frac{f_4^{16}}{f_2^{24}} +16q^2\frac{f_4^4f_8^8}{f_2^{20}}.
\end{align*}
\end{lemma}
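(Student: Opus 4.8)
The plan is to express $1/f_1^4$ through Ramanujan's theta function $\varphi(q)=\sum_{n\in\mathbb{Z}}q^{n^2}=f_2^5/(f_1^2f_4^2)$ and then apply the standard $2$--dissection of $\varphi(q)^2$. Since $\varphi(q)^2=f_2^{10}/(f_1^4f_4^4)$, the target identity is equivalent to
\[
\frac{1}{f_1^4}=\frac{f_4^4}{f_2^{10}}\,\varphi(q)^2,
\]
so it suffices to produce the $2$--dissection of $\varphi(q)^2$ in eta--quotient form and then collect powers of $q$.

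For the dissection I would start from the elementary splitting of $\sum_n q^{n^2}$ by the parity of $n$: the even terms give $\varphi(q^4)$ and the odd terms give $2q\psi(q^8)$, where $\psi(q)=\sum_{n\ge 0}q^{n(n+1)/2}=f_2^2/f_1$, so that $\varphi(q)=\varphi(q^4)+2q\psi(q^8)$. Squaring and separating even and odd powers of $q$ — every $f_{2r}$ being a series in $q^2$ — yields
\[
\varphi(q)^2=\bigl[\varphi(q^4)^2+4q^2\psi(q^8)^2\bigr]+4q\,\varphi(q^4)\psi(q^8).
\]
The bracketed (even) part equals $\varphi(q^2)^2$ by the classical relation $\varphi(q)^2=\varphi(q^2)^2+4q\psi(q^4)^2$ (equivalently $\theta_3(q)^2=\theta_3(q^2)^2+\theta_2(q^2)^2$) with $q$ replaced by $q^2$; if one prefers a self--contained argument, this relation follows in one line from $\varphi(q)^2=\sum_{m,n\in\mathbb{Z}}q^{m^2+n^2}$ via the substitution $u=m+n$, $v=m-n$, treating the cases $u\equiv v\equiv 0$ and $u\equiv v\equiv 1\pmod 2$ separately. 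Hence
\[
\varphi(q)^2=\varphi(q^2)^2+4q\,\varphi(q^4)\psi(q^8).
\]

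It then remains to substitute the eta--quotient evaluations $\varphi(q^2)=f_4^5/(f_2^2f_8^2)$, $\varphi(q^4)=f_8^5/(f_4^2f_{16}^2)$, and $\psi(q^8)=f_{16}^2/f_8$, which give $\varphi(q^2)^2=f_4^{10}/(f_2^4f_8^4)$ and $\varphi(q^4)\psi(q^8)=f_8^4/f_4^2$, so that
\[
\frac{1}{f_1^4}=\frac{f_4^4}{f_2^{10}}\cdot\frac{f_4^{10}}{f_2^4f_8^4}+4q\,\frac{f_4^4}{f_2^{10}}\cdot\frac{f_8^4}{f_4^2}=\frac{f_4^{14}}{f_2^{14}f_8^4}+4q\,\frac{f_4^2f_8^4}{f_2^{10}},
\]
which is the assertion of the lemma.

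I expect the only genuinely non--routine ingredient to be identifying the even part of $\varphi(q)^2$ with $\varphi(q^2)^2$ — really the classical fact $\theta_3(q)^2=\theta_3(q^2)^2+\theta_2(q^2)^2$ — since everything else is mechanical manipulation of $q$--Pochhammer symbols. Should a reviewer want the argument fully elementary, the $(m+n,\,m-n)$ change of variables sketched above supplies that step without appeal to any external table of theta identities.
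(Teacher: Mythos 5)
Your derivation is correct, and every step checks out: the eta--quotient evaluations $\varphi(q)=f_2^5/(f_1^2f_4^2)$ and $\psi(q)=f_2^2/f_1$ are standard, the parity split $\varphi(q)=\varphi(q^4)+2q\psi(q^8)$ is right (the doubling coming from $m\mapsto -m-1$ preserving $m(m+1)$), the identification of the even part of $\varphi(q)^2$ with $\varphi(q^2)^2$ via $\varphi(q)^2=\varphi(q^2)^2+4q\psi(q^4)^2$ is the correct classical fact, and the final bookkeeping of exponents lands exactly on $\frac{f_4^{14}}{f_2^{14}f_8^4}+4q\frac{f_4^2f_8^4}{f_2^{10}}$. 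The paper, by contrast, does not prove this lemma at all: it simply cites Equation (18) of Brietzke, da Silva, and Sellers. So your argument is not merely a different route --- it supplies a self--contained elementary proof where the paper offers only a reference. What your approach buys is exactly what you say at the end: the identity is reduced to the single theta relation $\theta_3(q)^2=\theta_3(q^2)^2+\theta_2(q^2)^2$, which your $(m+n,\,m-n)$ change of variables establishes from scratch, so nothing external is needed. The only cost is length; for the purposes of this paper the citation suffices, but your version would be the natural thing to include if one wanted the exposition fully self--contained.
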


\begin{proof}
This identity can be found in the work of Brietzke, da Silva, and Sellers \cite[Equation (18)]{BSS}. 
\end{proof}

\begin{lemma}
\label{L12}
We have
$$
f_1^2  = \frac{f_2f_8^5}{f_4^2f_{16}^2} - 2q \frac{f_2f_{16}^2}{f_8}.
$$
\end{lemma}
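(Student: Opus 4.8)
The plan is to recognize Lemma~\ref{L12} as a restatement of a classical $2$-dissection of one of Ramanujan's theta functions. Write $\varphi(q)=\sum_{n=-\infty}^{\infty}q^{n^2}$ and $\psi(q)=\sum_{n=0}^{\infty}q^{n(n+1)/2}$ for the usual theta functions, and recall the product representations $\varphi(q)=f_2^5/(f_1^2f_4^2)$ and $\psi(q)=f_2^2/f_1$, together with the evaluation $\varphi(-q)=f_1^2/f_2$; each of these is an immediate consequence of the Jacobi triple product. Dividing the asserted identity through by $f_2$, the claim becomes equivalent to
\[
\varphi(-q)=\frac{f_8^5}{f_4^2f_{16}^2}-2q\,\frac{f_{16}^2}{f_8}.
\]

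To establish this, I would split the defining series of $\varphi(-q)$ according to the parity of the summation index. Since $n^2\equiv n\pmod 2$, we have $\varphi(-q)=\sum_{n\in\mathbb{Z}}(-1)^nq^{n^2}$. The even indices $n=2m$ contribute $\sum_{m\in\mathbb{Z}}q^{4m^2}=\varphi(q^4)$. For the odd indices $n=2m+1$ we use the identity $(2m+1)^2=4m(m+1)+1$ to obtain a contribution of $-q\sum_{m\in\mathbb{Z}}q^{4m(m+1)}$; pairing the index $m$ with $-m-1$ shows that this last sum equals $2\sum_{m\geq 0}q^{4m(m+1)}=2\psi(q^8)$, so the odd part is $-2q\psi(q^8)$. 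Hence $\varphi(-q)=\varphi(q^4)-2q\psi(q^8)$. Replacing $q$ by $q^4$ in the product formula for $\varphi$ gives $\varphi(q^4)=f_8^5/(f_4^2f_{16}^2)$, and replacing $q$ by $q^8$ in the product formula for $\psi$ gives $\psi(q^8)=f_{16}^2/f_8$; substituting these and then multiplying through by $f_2$ yields precisely the statement of the lemma.

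There is no serious obstacle in this argument: once the three theta-function product identities are in hand, everything reduces to a one-line series manipulation. The only point demanding a little care is the index shift $m\mapsto -m-1$ in the odd-index sum, which is responsible for the coefficient $2$ and hence for the factor $-2q$ in the lemma. In the final write-up I would record precise citations for the product representations of $\varphi(q)$, $\psi(q)$, and $\varphi(-q)$; alternatively, since this $2$-dissection of $f_1^2$ is itself well known, one could simply quote it from the literature, but the short derivation above keeps the treatment self-contained and in the elementary spirit of the rest of the paper.
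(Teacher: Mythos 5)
Your argument is correct, and it is genuinely more self-contained than what the paper does: the paper simply cites this identity from da Silva and Sellers \cite[Equation (12), Lemma 1]{dSS} and gives no derivation. Your route---dividing by $f_2$ to recognize the statement as the classical $2$-dissection $\varphi(-q)=\varphi(q^4)-2q\psi(q^8)$, proved by splitting $\sum_{n\in\mathbb{Z}}(-1)^nq^{n^2}$ over the parity of $n$ and using the involution $m\mapsto -m-1$ on the odd-index sum---is sound at every step, and the product representations $\varphi(q)=f_2^5/(f_1^2f_4^2)$, $\psi(q)=f_2^2/f_1$, and $\varphi(-q)=f_1^2/f_2$ that you invoke are all standard consequences of the Jacobi triple product. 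The trade-off is the usual one: the citation keeps the preliminaries short and leans on the literature, while your derivation makes the lemma verifiable on the spot at the cost of importing the three theta-function product formulas (which would themselves need citations or proofs). Either is acceptable; if you adopt the self-contained version, do supply references for those product representations, since they are the only non-elementary inputs remaining.
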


\begin{proof}
See the work of da Silva and Sellers \cite[Equation (12), Lemma 1]{dSS}. 
\end{proof}

\begin{lemma}
We have
\begin{align*}
\frac{1}{f_1f_3} & = \frac{f_8^2f_{12}^5}{f_2^2f_4f_6^4f_{24}^2} + q \frac{f_4^5f_{24}^2}{f_2^4f_6^2f_8^2f_{12}}, \\  
f_1f_3 & = \frac{f_2f_8^2f_{12}^4}{f_4^2f_6f_{24}^2} - q \frac{f_4^4f_6f_{24}^2}{f_2f_8^2f_{12}^2}. 
\end{align*}
\label{L3}
\end{lemma}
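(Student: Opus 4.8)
The plan is to obtain both identities of Lemma~\ref{L3} simultaneously from a single $2$-dissection of the product $\psi(q)\psi(q^3)$, where $\psi(q)=\sum_{n\ge 0}q^{n(n+1)/2}=f_2^2/f_1$ is Ramanujan's theta function. The point is that $\dfrac{1}{f_1f_3}=\dfrac{\psi(q)\psi(q^3)}{f_2^2f_6^2}$, and, since $\psi(-q)=f_1f_4/f_2$, also $f_1f_3=\dfrac{f_2^2f_6^2}{\psi(q)\psi(q^3)}$; so a clean $2$-dissection of $\psi(q)\psi(q^3)$ feeds directly into both sides.

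First I would recall the classical $2$-dissection of $\psi$, namely
$$\psi(q)=f(q^{6},q^{10})+q\,f(q^{2},q^{14}),$$
where $f(a,b)=\sum_{n\in\mathbb Z}a^{n(n+1)/2}b^{n(n-1)/2}$; this follows by splitting $\sum_{n\ge 0}q^{n(n+1)/2}$ according to $n\bmod 4$, i.e.\ according to the parity of the triangular number $n(n+1)/2$. Applying this at $q$ and at $q^{3}$ and multiplying, the four cross terms carry the prefactors $q^{0},q,q^{3},q^{4}$; since each theta factor that appears is a series in $q^{2}$, the terms with prefactor $q^{0}$ and $q^{4}$ combine into a series $P(q^{2})$ and those with prefactor $q$ and $q^{3}$ combine into $q\,W(q^{2})$, giving $\psi(q)\psi(q^{3})=P(q^{2})+q\,W(q^{2})$ with $P,W$ explicit sums of products of two theta functions. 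The substance of the argument is then to identify
$$\psi(q)\psi(q^3)=\frac{f_8^2f_{12}^5}{f_4f_6^2f_{24}^2}+q\,\frac{f_4^5f_{24}^2}{f_2^2f_8^2f_{12}},$$
i.e.\ $P(q^2)=f_8^2f_{12}^5/(f_4f_6^2f_{24}^2)$ and $W(q^2)=f_4^5f_{24}^2/(f_2^2f_8^2f_{12})$. This is a known $2$-dissection that can be read off from classical Schr\"oter-type product formulas for theta functions, or simply cited (e.g.\ from Hirschhorn's book or from the references already present in the bibliography).

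Granting this, the first identity of Lemma~\ref{L3} is immediate upon dividing by $f_2^2f_6^2$. For the second, I would replace $q\mapsto -q$ in the $\psi$-dissection (which only flips the sign of the $qW$-term, since $P,W$ are series in $q^2$) and multiply the two resulting expressions, obtaining $P(q^2)^2-q^2W(q^2)^2=\psi(q)\psi(-q)\psi(q^3)\psi(-q^3)=f_2f_4f_6f_{12}$, where $\psi(q)\psi(-q)=f_2f_4$. Hence
$$f_1f_3=\frac{f_2^2f_6^2}{\psi(q)\psi(q^3)}=\frac{f_2^2f_6^2\bigl(P(q^2)-qW(q^2)\bigr)}{P(q^2)^2-q^2W(q^2)^2}=\frac{f_2f_6\bigl(P(q^2)-qW(q^2)\bigr)}{f_4f_{12}},$$
and substituting the expressions for $P$ and $W$ and simplifying yields exactly the stated $2$-dissection of $f_1f_3$. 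Equivalently, one can first establish the ``conjugate'' identity $f_1f_3\big|_{q\mapsto -q}=f_2^3f_6^3/(f_1f_3f_4f_{12})$ directly from $f_1\big|_{q\mapsto -q}=f_2^3/(f_1f_4)$ (and the same with $q$ replaced by $q^3$), and then pass between the two identities by rationalizing $1/(f_1f_3)$.

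The one genuinely nontrivial step is the \emph{identification} of $P$ and $W$ with the displayed eta-quotients: the underlying theta-product manipulations are classical but somewhat intricate, which is why in practice this $2$-dissection is quoted rather than rederived. Everything else --- the $2$-dissection of $\psi$, the regrouping into even and odd parts, and the rationalization that transfers the result from $\psi(q)\psi(q^3)$ to $1/(f_1f_3)$ and $f_1f_3$ --- is routine bookkeeping.
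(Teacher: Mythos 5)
Your proposal is correct, but it takes a different route from the paper: the paper offers no derivation at all, simply citing both identities from Hirschhorn's book (Equations (30.12.3) and (30.12.1) of \cite{H}). Your argument instead reduces both to a single known $2$-dissection, namely $\psi(q)\psi(q^3)=\frac{f_8^2f_{12}^5}{f_4f_6^2f_{24}^2}+q\,\frac{f_4^5f_{24}^2}{f_2^2f_8^2f_{12}}$, and then obtains the first identity by dividing by $f_2^2f_6^2$ and the second by the conjugate-multiplication (rationalization) trick, using $\psi(q)\psi(-q)=f_2f_4$ to evaluate $P(q^2)^2-q^2W(q^2)^2=f_2f_4f_6f_{12}$; I checked that the resulting eta-quotients match the lemma exactly. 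What your approach buys is a structural explanation of why the two displayed identities are essentially one fact (they are conjugate under $q\mapsto-q$ up to rationalization), and it shrinks the external input from two cited identities to one. What it does not buy is full self-containedness: as you acknowledge, the identification of the even and odd parts of $\psi(q)\psi(q^3)$ with the displayed eta-quotients is itself a nontrivial Schr\"oter-type theta identity that you would still quote from the literature, so in the end both your proof and the paper's rest on a citation --- yours just places the citation one level deeper.
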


\begin{proof}
Both of these identities can be found in Hirschhorn \cite{H}; the first is Equation (30.12.3) while the second is Equation (30.12.1).
\end{proof}
%We also require two classical identities, the first of which is often referred to as Euler's Pentagonal Number Theorem.  
%\begin{lemma}
%We have
%\begin{equation*}
%f_1 = \sum_{k=-\infty}^{\infty}(-1)^k q^{k(3k-1)/2}.  
%\end{equation*}
%\label{L1}
%\end{lemma}

%\begin{proof}
%See \cite[Equation (1.6.1)]{H}.         
%\end{proof}
%Next, we recall this well--known identity of Jacobi:  
%\begin{lemma}
%We have
%\begin{equation*}
%f_1^3 = \sum_{k=0}^{\infty}(-1)^k (2k+1) q^{k(k+1)/2}.  
%\end{equation*}
%\label{L6}
%\end{lemma}

%\begin{proof}
%See \cite[Theorem 1.3.9]{BB}.    
%\end{proof}

Finally, we state without proof the well--known result regarding congruences of the functions $f_k$ modulo powers of a prime (which, in essence, relies on the divisibility properties of binomial coefficients).  
\begin{lemma}
\label{general_congs_result}    
For any prime $p,$ any $j\geq 1,$ and any $m\geq 1,$
$f_{m}^{p^jk} \equiv f_{pm}^{p^{j-1}k} \pmod{p^j}$.
\end{lemma}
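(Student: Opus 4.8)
The plan is to reduce the statement to the single polynomial congruence
\[
(1-x)^{p^j} \equiv (1-x^p)^{p^{j-1}} \pmod{p^j} \quad \text{in } \mathbb{Z}[x],
\]
and then to promote this to the infinite products defining $f_m$ and $f_{pm}$. First I would prove the polynomial congruence by induction on $j$. For the base case $j=1$, the binomial theorem gives $(1-x)^p = \sum_{i=0}^{p}\binom{p}{i}(-x)^i$, and since $p \mid \binom{p}{i}$ for $1 \le i \le p-1$, every middle term vanishes modulo $p$, leaving $(1-x)^p \equiv 1 + (-x)^p \equiv 1 - x^p \pmod p$ (the final step uses $(-x)^p = -x^p$ when $p$ is odd, and $1 + x^2 \equiv 1 - x^2 \pmod 2$ when $p = 2$).

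For the inductive step (so $j \ge 2$), the induction hypothesis at index $j-1$ gives $(1-x)^{p^{j-1}} = (1-x^p)^{p^{j-2}} + p^{j-1} g(x)$ for some $g \in \mathbb{Z}[x]$. Raising both sides to the $p$-th power and expanding,
\[
(1-x)^{p^j} = (1-x^p)^{p^{j-1}} + \sum_{i=1}^{p}\binom{p}{i}(1-x^p)^{p^{j-2}(p-i)}\bigl(p^{j-1}g(x)\bigr)^i.
\]
The $i=1$ term carries the factor $p \cdot p^{j-1} = p^{j}$, while each term with $i \ge 2$ carries $(p^{j-1})^i$ and hence at least $p^{2(j-1)}$; since $2(j-1) \ge j$ for $j \ge 2$, the whole sum vanishes modulo $p^j$, completing the induction.

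To finish, I would write $f_m = \prod_{i\ge 1}(1-q^{mi})$ and compute
\[
f_m^{p^j k} = \prod_{i\ge 1}\bigl(1-q^{mi}\bigr)^{p^j k} \equiv \prod_{i\ge 1}\bigl(1-q^{pmi}\bigr)^{p^{j-1} k} = f_{pm}^{p^{j-1}k} \pmod{p^j},
\]
where the middle congruence comes from applying the polynomial congruence with $x = q^{mi}$, raising to the $k$-th power for each $i$, and then multiplying the resulting congruences together; this last step is legitimate in $\mathbb{Z}[[q]]$ since each coefficient of the product is determined by only finitely many of the factors. I do not expect a genuine obstacle here, as this is a classical fact; the only points that require care are the bookkeeping in the inductive step (the inequality $2(j-1)\ge j$ ensuring the cross terms are killed) and the routine formal-power-series justification for passing from a congruence of polynomials to one of infinite products.
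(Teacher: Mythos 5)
Your proof is correct, and it fills in exactly the standard argument the paper alludes to: the lemma is stated there without proof, with only the remark that it ``relies on the divisibility properties of binomial coefficients,'' which is precisely the content of your induction establishing $(1-x)^{p^j} \equiv (1-x^p)^{p^{j-1}} \pmod{p^j}$ and its factor-by-factor application to the product defining $f_m$. The key inequality $2(j-1)\ge j$ in the inductive step and the coefficientwise justification for multiplying infinitely many congruences in $\mathbb{Z}[[q]]$ are both handled properly.
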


We close this section by listing a number of dissection results for the generating function for $PD_2$ which are easily found via elementary generating function dissections.  

\begin{theorem}
\label{2-dissections}    
We have 
\begin{eqnarray*}
\sum_{n=0}^{\infty} PD_2(2n)q^n
&=& 
\frac{f_4^2f_6^4}{f_1^2f_3^2f_{12}^2}, \textrm{\ \ \ and} \\
\sum_{n=0}^{\infty} PD_2(2n+1)q^n
&=& 
\frac{f_2^6f_{12}^2}{f_1^4f_4^2f_{6}^2}.
\end{eqnarray*}
\end{theorem}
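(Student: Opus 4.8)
The plan is to extract the even and odd parts of the generating function \eqref{genfn_main} by performing a 2-dissection of $\frac{f_4f_6^2}{f_1f_3f_{12}}$. Since $f_4$, $f_6^2$, and $f_{12}$ already involve only even powers of $q$, the only factor that genuinely needs dissecting is $\frac{1}{f_1 f_3}$, and this is precisely what Lemma 2.3 (labeled \texttt{L3}) provides. First I would substitute the first identity of Lemma 2.3,
\[
\frac{1}{f_1f_3} = \frac{f_8^2f_{12}^5}{f_2^2f_4f_6^4f_{24}^2} + q\,\frac{f_4^5f_{24}^2}{f_2^4f_6^2f_8^2f_{12}},
\]
into \eqref{genfn_main}. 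Multiplying through by $\frac{f_4f_6^2}{f_{12}}$ cleanly separates the series into its part supported on even exponents (the first summand, with no leading $q$) and its part supported on odd exponents (the second summand, carrying the factor $q$). Collecting the even-exponent terms and simplifying the resulting product of $q$-Pochhammer symbols should yield
\[
\sum_{n\ge 0} PD_2(2n)q^{2n} = \frac{f_4f_6^2}{f_{12}}\cdot\frac{f_8^2f_{12}^5}{f_2^2f_4f_6^4f_{24}^2} = \frac{f_8^2f_{12}^4}{f_2^2f_6^2f_{24}^2},
\]
and then replacing $q^2$ by $q$ (equivalently, halving every index) gives $\sum_{n\ge0}PD_2(2n)q^n = \frac{f_4^2f_6^4}{f_1^2f_3^2f_{12}^2}$, matching the claimed formula.

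For the odd part, I would isolate the coefficient of $q$ in the dissection, namely
\[
\sum_{n\ge 0} PD_2(2n+1)q^{2n+1} = q\cdot\frac{f_4f_6^2}{f_{12}}\cdot\frac{f_4^5f_{24}^2}{f_2^4f_6^2f_8^2f_{12}} = q\,\frac{f_4^6f_{24}^2}{f_2^4f_8^2f_{12}^2},
\]
then cancel the factor $q$ and again substitute $q^2 \mapsto q$ to obtain $\sum_{n\ge0}PD_2(2n+1)q^n = \frac{f_2^6f_{12}^2}{f_1^4f_4^2f_6^2}$, as asserted. Both computations are pure bookkeeping with exponents of $f_r$, since after the substitution $q^2\mapsto q$ each $f_{2r}$ becomes $f_r$.

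The only real subtlety — and the step most likely to trip up a careful reader — is making sure the 2-dissection of $\tfrac{1}{f_1f_3}$ is combined correctly with the prefactor $\tfrac{f_4f_6^2}{f_{12}}$, since the prefactor contributes only even powers and therefore does not mix the two residue classes; one must verify that no hidden odd-power contribution sneaks in from the $f_r$'s. Once that is granted, the argument is a direct algebraic simplification, so I expect no genuine obstacle beyond careful tracking of the Pochhammer exponents. An alternative route would be to start instead from Lemma 2.1 (\texttt{L4}) applied to $\tfrac1{f_1^4}$ after writing $\tfrac1{f_1f_3} = \tfrac{f_1^3}{f_1^4}\cdot\tfrac1{f_3}$, but this is more cumbersome; the approach via Lemma 2.3 is the cleanest and is the one I would present.
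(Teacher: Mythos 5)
Your proposal is correct, and every algebraic step checks out: multiplying the first identity of Lemma \ref{L3} by the even prefactor $\tfrac{f_4f_6^2}{f_{12}}$ gives $\tfrac{f_8^2f_{12}^4}{f_2^2f_6^2f_{24}^2}$ for the even part and $q\,\tfrac{f_4^6f_{24}^2}{f_2^4f_8^2f_{12}^2}$ for the odd part, and halving the indices yields exactly the two stated formulas. The comparison to the paper is slightly unusual here, though: the paper does not prove Theorem \ref{2-dissections} at all, but simply cites Theorem 21 of Andrews, Lewis, and Lovejoy \cite{ALL}. So your argument is a genuine, self-contained derivation where the paper defers to a reference. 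It is also entirely in the spirit of the paper, since the author applies precisely this technique --- squaring the 2-dissection of $\tfrac{1}{f_1f_3}$ from Lemma \ref{L3} and sorting by parity of the exponent --- one level deeper in the proof of Theorem \ref{4-dissections}. Your key observation, that $f_4$, $f_6^2$, and $f_{12}$ are already series in $q^2$ so that only $\tfrac{1}{f_1f_3}$ mixes the residue classes, is exactly the point that makes the dissection clean, and your worry about ``hidden odd-power contributions'' is correctly dismissed. The one thing your write-up takes for granted is the starting generating function \eqref{genfn_main} itself, which the paper likewise imports from \cite{ALL}; given that, your proof is complete.
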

\begin{proof}
See \cite[Theorem 21]{ALL}.    
\end{proof}
\begin{theorem}
\label{4-dissections}    
We have 
\begin{eqnarray*}
\sum_{n=0}^{\infty} PD_2(4n)q^n
&=& 
\frac{f_4^4f_6^8}{f_1^4f_3^4f_{12}^4} + q\frac{f_2^{12}f_{12}^4}{f_1^8f_4^4f_6^4}, \\
\sum_{n=0}^{\infty} PD_2(4n+1)q^n
&=& 
\frac{f_2^{12}f_6^2}{f_1^8f_3^2f_4^4}, \\
\sum_{n=0}^{\infty} PD_2(4n+2)q^n
&=& 
2\frac{f_2^6f_6^2}{f_1^6f_3^2}, \textrm{\ \ \ and} \\
\sum_{n=0}^{\infty} PD_2(4n+3)q^n
&=& 
4\frac{f_4^4f_6^2}{f_1^4f_3^2}.
\end{eqnarray*}
\end{theorem}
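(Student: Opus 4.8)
The plan is to derive the $4$-dissection from the two $2$-dissections in Theorem \ref{2-dissections} by applying a further $2$-dissection to each of them. Write $A(q)=\sum_{n\ge0}PD_2(2n)q^n=\dfrac{f_4^2f_6^4}{f_1^2f_3^2f_{12}^2}$ and $B(q)=\sum_{n\ge0}PD_2(2n+1)q^n=\dfrac{f_2^6f_{12}^2}{f_1^4f_4^2f_6^2}$. In $A(q)$ every factor other than $f_1$ and $f_3$ is already a power series in $q^2$, and $f_1,f_3$ enter only through $\dfrac{1}{f_1^2f_3^2}=\left(\dfrac{1}{f_1f_3}\right)^2$; so first I would square the first identity of Lemma \ref{L3} and expand. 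The middle (cross) term simplifies substantially --- the $f_8$ and $f_{24}$ factors cancel completely --- and one is left with $\dfrac{1}{f_1^2f_3^2}=U(q^2)+q\,V(q^2)+q^2\,W(q^2)$ for explicit eta-quotients $U,V,W$ each involving only $f_r$ with $r$ even. Multiplying by the prefactor $\dfrac{f_4^2f_6^4}{f_{12}^2}$, itself a series in $q^2$, and collecting the terms according to the parity of the exponent of $q$ gives $A(q)=C(q^2)+q\,D(q^2)$; replacing $q^2$ by $q$ then yields the claimed formulas for $\sum PD_2(4n)q^n$ and $\sum PD_2(4n+2)q^n$.

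For $B(q)$ the only obstruction is $\dfrac{1}{f_1^4}$, which is precisely what Lemma \ref{L4} dissects. Writing $B(q)=\dfrac{f_2^6f_{12}^2}{f_4^2f_6^2}\cdot\dfrac{1}{f_1^4}$ and substituting the two-term expansion of $\dfrac{1}{f_1^4}$ produces, after the obvious cancellations, $B(q)=E(q^2)+4q\,G(q^2)$, where $E$ and $G$ again involve only $f_r$ with $r$ even and the prefactor $\dfrac{f_2^6f_{12}^2}{f_4^2f_6^2}$ is a series in $q^2$, so no additional dissection is needed. Extracting the even and odd parts and replacing $q^2$ by $q$ gives the formulas for $\sum PD_2(4n+1)q^n$ and $\sum PD_2(4n+3)q^n$; note that the explicit coefficient $4$ in Lemma \ref{L4} is exactly what delivers the coefficient $4$ in the $PD_2(4n+3)$ generating function (and, together with the factor $2$ already visible in $\sum PD_2(4n+2)q^n$, is the source of the divisibility phenomena that Theorems \ref{infinite_family_mod4} and \ref{infinite_family_mod8} exploit).

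The step I expect to be the main obstacle is purely the bookkeeping: verifying that once $(1/(f_1f_3))^2$ is expanded and multiplied by its prefactor, the three resulting pieces really do segregate cleanly according to the parity of the power of $q$ --- that is, that after all cancellations each piece is a single explicit $q$-power times a series in $q^2$ --- and likewise for the $1/f_1^4$ computation. This works because every eta-quotient surviving the simplification has all even subscripts, which makes the $2$-dissection automatic. As a safeguard against arithmetic slips in applying Lemmas \ref{L3} and \ref{L4}, I would check the leading coefficients of the four resulting series against the values $PD_2(0)=1$, $PD_2(1)=1$, $PD_2(2)=2$, and $PD_2(3)=4$.
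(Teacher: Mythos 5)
Your proposal is correct and follows essentially the same route as the paper: the author also squares the first identity of Lemma \ref{L3} against the prefactor $f_4^2f_6^4/f_{12}^2$ to dissect $\sum PD_2(2n)q^n$ into the $4n$ and $4n+2$ pieces, and (as carried out explicitly later in the proof of Theorem \ref{8n7_mod8}) applies Lemma \ref{L4} to $1/f_1^4$ in $\sum PD_2(2n+1)q^n$ to obtain the $4n+1$ and $4n+3$ pieces. Your numerical sanity check against $PD_2(0),\dots,PD_2(3)$ is a sensible addition but the argument is the same.
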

\begin{proof}
Using Lemma \ref{L3} and Theorem \ref{2-dissections},  we have 
\begin{eqnarray}
\sum_{n=0}^{\infty} PD_2(2n)q^n
&=& 
\frac{f_4^2f_6^4}{f_1^2f_3^2f_{12}^2} \notag \\
&=& 
\left( \frac{1}{f_1f_3} \right)^2 \frac{f_4^2f_6^4}{f_{12}^2} \notag \\
&=& 
\left( \frac{f_8^2f_{12}^5}{f_2^2f_4f_6^4f_{24}^2} + q \frac{f_4^5f_{24}^2}{f_2^4f_6^2f_8^2f_{12}} \right)^2 \frac{f_4^2f_6^4}{f_{12}^2} \notag \\
&=& 
\left( \frac{f_8^4f_{12}^{10}}{f_2^4f_4^2f_6^8f_{24}^4} + 2q \frac{f_8^2f_{12}^5}{f_2^2f_4f_6^4f_{24}^2}\cdot \frac{f_4^5f_{24}^2}{f_2^4f_6^2f_8^2f_{12}} + q^2\frac{f_4^{10}f_{24}^4}{f_2^8f_6^4f_8^4f_{12}^2}\right) \frac{f_4^2f_6^4}{f_{12}^2}. \label{2n_eqn1} 
\end{eqnarray}
From (\ref{2n_eqn1}), we see that 
\begin{eqnarray*}
\sum_{n=0}^{\infty} PD_2(4n)q^{2n}
&=& 
 \frac{f_8^4f_{12}^{8}}{f_2^4f_6^4f_{24}^4}  + q^2\frac{f_4^{12}f_{24}^4}{f_2^8f_8^4f_{12}^4}
\end{eqnarray*}
or 
\begin{eqnarray*}
\sum_{n=0}^{\infty} PD_2(4n)q^{n}
&=& 
 \frac{f_4^4f_{6}^{8}}{f_1^4f_3^4f_{12}^4}  + q\frac{f_2^{12}f_{12}^4}{f_1^8f_4^4f_{6}^4}.
\end{eqnarray*}
Also from (\ref{2n_eqn1}), we see that 
\begin{eqnarray*}
\sum_{n=0}^{\infty} PD_2(4n+2)q^{2n+1}
&=& 
2q \frac{f_8^2f_{12}^5}{f_2^2f_4f_6^4f_{24}^2}\cdot \frac{f_4^5f_{24}^2}{f_2^4f_6^2f_8^2f_{12}}  \cdot \frac{f_4^2f_6^4}{f_{12}^2} \\
&=& 
2q \frac{f_4^6f_{12}^2}{f_2^6f_6^2}
\end{eqnarray*}
after simplification.  And this implies 
\begin{eqnarray*}
\sum_{n=0}^{\infty} PD_2(4n+2)q^{n}
&=& 
2\frac{f_2^6f_{6}^2}{f_1^6f_3^2}
\end{eqnarray*}
which also appears in \cite[Theorem 1.3]{BO}.  Indeed, the generating function results for $PD_2(4n+1)$ and $PD_2(4n+3)$ follow in similar fashion and can also be found in \cite[Theorem 1.3]{BO}.
\end{proof}
With these tools in hand, we proceed to our elementary proofs of Theorems \ref{infinite_family_mod4} and \ref{infinite_family_mod8}.

\section{An Infinite Family of Congruences Modulo 4}
Our focus in this section is on Theorem \ref{infinite_family_mod4}.  In order to prove this result, we begin with several generating function dissections for $PD_2(n)$ modulo 4.  
We begin by proving the 2-dissection for $PD_2(n)$ modulo 4.

\begin{theorem}
\label{2n_mod4}
We have 
\begin{eqnarray*}
\sum_{n=0}^{\infty} PD_2(2n)q^n &\equiv & \frac{f_4^2}{f_1^2f_3^2} \pmod{4}, \textrm{\ \ \ and} \\
\sum_{n=0}^{\infty} PD_2(2n+1)q^n &\equiv & f_6^2 \pmod{4}. \\
\end{eqnarray*}
\end{theorem}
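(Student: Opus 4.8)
The plan is to start from the two exact generating functions in Theorem \ref{2-dissections} and reduce them modulo $4$ using Lemma \ref{general_congs_result}. For the even-index series, I would take
$$\sum_{n=0}^{\infty} PD_2(2n)q^n = \frac{f_4^2f_6^4}{f_1^2f_3^2f_{12}^2}$$
and observe that $f_6^4 \equiv f_{12}^2 \pmod{4}$ by Lemma \ref{general_congs_result} with $p=2$, $j=2$, $m=6$, $k=1$ (so $f_6^{4} \equiv f_{12}^{2}$). Cancelling $f_{12}^2$ from numerator and denominator leaves exactly $f_4^2/(f_1^2 f_3^2)$, which is the claimed congruence. The only subtlety to spell out is that dividing both sides of a congruence by a power series with constant term $1$ (here $f_{12}^2$) preserves the congruence modulo $4$; I would note this once, since the same move recurs throughout the paper.

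For the odd-index series, I would start from
$$\sum_{n=0}^{\infty} PD_2(2n+1)q^n = \frac{f_2^6f_{12}^2}{f_1^4f_4^2f_{6}^2}.$$
Here I apply Lemma \ref{general_congs_result} twice: $f_1^4 \equiv f_2^2 \pmod 4$ (take $m=1$, $j=2$) and $f_2^6 = f_2^4 \cdot f_2^2 \equiv f_4^2 \cdot f_2^2 \pmod 4$ (take $m=2$, $j=2$ on the factor $f_2^4$). Substituting, the right-hand side becomes, modulo $4$,
$$\frac{f_4^2 f_2^2 f_{12}^2}{f_2^2 f_4^2 f_6^2} = \frac{f_{12}^2}{f_6^2}.$$
At this point one further application of $f_6^4 \equiv f_{12}^2 \pmod 4$ gives $f_{12}^2/f_6^2 \equiv f_6^4/f_6^2 = f_6^2 \pmod 4$, which is the second claimed congruence.

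I expect no genuine obstacle here — every step is a mechanical application of Lemma \ref{general_congs_result} together with the harmless cancellation of unit power series. The one place that warrants a word of care is making sure each invocation of Lemma \ref{general_congs_result} is legitimate: the exponent appearing must actually be (a multiple of) $p^j = 4$ for the $j=2$ version, which is why I peel off $f_2^6$ as $f_2^4 \cdot f_2^2$ rather than trying to apply the lemma to the whole $f_2^6$ at once. Beyond bookkeeping of this kind, the proof is short and direct.
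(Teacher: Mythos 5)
Your proposal is correct and matches the paper's proof in all essentials: both start from the exact $2$--dissection in Theorem \ref{2-dissections} and reduce modulo $4$ via Lemma \ref{general_congs_result} (using $f_6^4\equiv f_{12}^2$, $f_1^4\equiv f_2^2$, $f_2^4\equiv f_4^2$). No changes needed.
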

\begin{proof}
Thanks to Lemma \ref{general_congs_result} and Theorem \ref{2-dissections}, we see that 
\begin{eqnarray*}
 \sum_{n=0}^{\infty} PD_2(2n)q^{n} 
 &=& \frac{f_4^2f_{6}^4}{f_1^2f_3^2f_{12}^2} \\
 &\equiv & \frac{f_2^4f_{12}^2}{f_1^2f_3^2f_{12}^2} \pmod{4} \\
 &\equiv & \frac{f_4^2}{f_1^2f_3^2} \pmod{4}.
\end{eqnarray*}
Again using Lemma \ref{general_congs_result}, Theorem \ref{2-dissections}, and elementary simplifications, we have 
\begin{eqnarray*}
 \sum_{n=0}^{\infty} PD_2(2n+1)q^{n} 
 &=&  
 \frac{f_2^6f_{12}^2}{f_1^4f_4^2f_{6}^2} \\
 &\equiv &  
 \frac{f_{6}^4}{f_{6}^2} \pmod{4}\\
 &=& 
 f_6^2. 
\end{eqnarray*}
\end{proof}
Thanks to the fact that   $\sum_{n=0}^{\infty} PD_2(2n+1)q^{n} \equiv f_6^2 \pmod{4}$, which is a function of $q^6$, we immediately have the following corollary:  
\begin{corollary}
\label{4n3_6n3_6n5_mod4}
For all $n\geq 0,$ 
\begin{eqnarray*}
PD_2(4n+3) &\equiv 0 & \pmod{4}, \\
PD_2(6n+3) &\equiv 0 & \pmod{4}, \textrm{\ \ \ and} \\
PD_2(6n+5) &\equiv 0 & \pmod{4}.
\end{eqnarray*}   
\end{corollary}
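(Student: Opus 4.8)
The plan is to read off all three congruences directly from the second assertion of Theorem~\ref{2n_mod4}, namely $\sum_{n\geq 0} PD_2(2n+1)q^n \equiv f_6^2 \pmod 4$. The crucial observation is structural rather than computational: since $f_6 = (1-q^6)(1-q^{12})(1-q^{18})\cdots$ is a power series in $q^6$, so is $f_6^2$, and therefore the coefficient of $q^m$ in $f_6^2$ vanishes unless $6\mid m$. Comparing coefficients of $q^m$ on both sides of the congruence yields $PD_2(2m+1)\equiv 0\pmod 4$ for every $m\geq 0$ with $6\nmid m$.

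It then remains only to express each of the three arithmetic progressions in the form $2m+1$ and to check that the resulting index $m$ is never divisible by $6$. For $4n+3 = 2(2n+1)+1$ the index is $m = 2n+1$, which is odd and hence not divisible by $6$, so $PD_2(4n+3)\equiv 0\pmod 4$. For $6n+3 = 2(3n+1)+1$ the index is $m = 3n+1$, which satisfies $m\equiv 1\pmod 3$, so $3\nmid m$ and a fortiori $6\nmid m$, giving $PD_2(6n+3)\equiv 0\pmod 4$. For $6n+5 = 2(3n+2)+1$ the index is $m = 3n+2 \equiv 2\pmod 3$, so again $3\nmid m$ and $6\nmid m$, giving $PD_2(6n+5)\equiv 0\pmod 4$.

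I do not anticipate any genuine obstacle: the entire arithmetic content has already been absorbed into Theorem~\ref{2n_mod4}, and the corollary is purely a matter of noting that $f_6^2$ is supported on exponents divisible by $6$ together with an elementary residue check for each of the three progressions. The only point demanding a little care is the bookkeeping of the reindexing $n\mapsto m$, so that the divisibility-by-$6$ condition is tested against the correct variable in each case.
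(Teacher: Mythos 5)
Your proposal is correct and follows exactly the paper's argument: both read the three congruences off the fact that $f_6^2$ is supported on exponents divisible by $6$, via the same reindexings $4n+3=2(2n+1)+1$, $6n+3=2(3n+1)+1$, and $6n+5=2(3n+2)+1$. No differences worth noting.
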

\begin{proof} 
We note that, once expanded, the power series representation of $f_6^2$ must only contain powers of $q$ whose exponents are multiples of 6.  Therefore, we immediately know that $PD_2(2(2n+1)+1),$ $PD_2(2(3n+1)+1),$ and $PD_2(2(3n+2)+1)$ must all be congruent to 0 $\pmod{4}.$ 
\end{proof}
We note that the above congruences, along with several others, appear in \cite[Theorem 1.4]{BO}.  

We next transition to the necessary 4--dissections of the generating function for $PD_2(n)$ modulo 4.

\begin{theorem}
\label{4n_mod4}
We have
\begin{align}
\sum_{n=0}^{\infty} PD_2(4n)q^n & \equiv  \left(\frac{f_2^3}{f_6}\right)^2 +qf_{12}^2 \pmod{4}  \textrm{\ \ \ and}
\label{4diss} \\
\sum_{n=0}^{\infty} PD_2(4n+2)q^n & \equiv 2f_2^3f_6 \pmod{4}. \notag
\end{align}
\end{theorem}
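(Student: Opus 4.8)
The plan is to read off the exact $4$-dissections recorded in Theorem~\ref{4-dissections} and then reduce each of them modulo $4$ using nothing more than Lemma~\ref{general_congs_result} together with the standard fact that a power series with constant term $1$ is invertible over $\mathbb{Z}/2^j\mathbb{Z}$; the latter legitimizes replacing numerators and denominators by factors congruent to them modulo $2^j$.

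First I would attack $\sum_{n\geq 0}PD_2(4n)q^n = \frac{f_4^4f_6^8}{f_1^4f_3^4f_{12}^4} + q\frac{f_2^{12}f_{12}^4}{f_1^8f_4^4f_6^4}$. For the leading summand, apply Lemma~\ref{general_congs_result} with $p=2$, $j=2$ in the forms $f_1^4\equiv f_2^2$ and $f_3^4\equiv f_6^2$, together with the squares of the basic congruences $f_2^4\equiv f_4^2$ and $f_6^4\equiv f_{12}^2$, i.e. $f_4^4\equiv f_2^8$ and $f_{12}^4\equiv f_6^8$, all modulo $4$. Substituting these, the leading summand collapses to $\frac{f_2^8f_6^8}{f_2^2f_6^2f_6^8}=\frac{f_2^6}{f_6^2}=\left(\frac{f_2^3}{f_6}\right)^2 \pmod 4$. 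For the second summand, use $f_1^8\equiv f_2^4\pmod 4$, whence $f_2^{12}/f_1^8\equiv f_2^8\equiv f_4^4\pmod 4$; the factor $f_4^4$ cancels, leaving $q\,f_{12}^4/f_6^4$, which becomes $q\,f_{12}^2 \pmod 4$ after writing $f_{12}^4=f_{12}^2\cdot f_{12}^2$ and invoking $f_6^4\equiv f_{12}^2\pmod 4$. Adding the two reduced summands yields \eqref{4diss}.

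Next I would handle $\sum_{n\geq 0}PD_2(4n+2)q^n = 2\,\frac{f_2^6f_6^2}{f_1^6f_3^2}$. Because this expression already carries a global factor of $2$, it suffices to pin down the quotient modulo $2$: there Lemma~\ref{general_congs_result} with $p=2$, $j=1$ gives $f_1^6\equiv f_2^3$ and $f_3^2\equiv f_6$, so $\frac{f_2^6f_6^2}{f_1^6f_3^2}\equiv f_2^3f_6\pmod 2$, and multiplying back through by $2$ gives $2f_2^3f_6\pmod 4$, as claimed.

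I do not expect a genuine obstacle here: the whole argument is exponent bookkeeping driven by Lemma~\ref{general_congs_result}. The only points deserving a line of justification are the permissibility of substituting congruent unit power series inside quotients modulo $2^j$ (immediate from invertibility of series with constant term $1$), and the observation that $f_{12}^4/f_6^4$ is congruent modulo $4$ to both $f_6^4$ and $f_{12}^2$, the latter being the form that matches the stated theorem.
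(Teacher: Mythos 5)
Your proposal is correct; every application of Lemma~\ref{general_congs_result} checks out, and the remark about inverting unit power series modulo $2^j$ is exactly the justification needed to substitute inside denominators. Your route differs from the paper's in the order of operations: you take the exact $4$-dissections already recorded in Theorem~\ref{4-dissections} and reduce them modulo $4$, whereas the paper never touches those exact formulas in this proof. Instead it starts from the mod-$4$ reduction of the $2$-dissection in Theorem~\ref{2n_mod4}, namely $\sum PD_2(2n)q^n \equiv f_4^2\left(\frac{1}{f_1f_3}\right)^2 \pmod 4$, expands via Lemma~\ref{L3}, splits into even and odd parts, and only then simplifies with Lemma~\ref{general_congs_result}. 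The two arguments are computationally equivalent (Theorem~\ref{4-dissections} is itself proved with the same Lemma~\ref{L3} dissection), but yours is shorter because it reuses work the paper has already done, at the cost of depending on the full strength of Theorem~\ref{4-dissections}; the paper's version is self-contained from the mod-$4$ data onward and keeps the intermediate expression (\ref{4n0_4n2}) available for reuse. Either is a legitimate proof.
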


\begin{proof}
From Theorem \ref{2n_mod4}, we see that 
\begin{eqnarray*}
 \sum_{n=0}^{\infty} PD_2(2n)q^{n} 
 &\equiv & \frac{f_4^2}{f_1^2f_3^2} \pmod{4} \\
 &=& f_4^2\left( \frac{1}{f_1f_3} \right)^2.
\end{eqnarray*}
Thus, thanks to Lemma \ref{L3}, we know 
\begin{eqnarray}
 \sum_{n=0}^{\infty} PD_2(2n)q^{n} 
 &\equiv & f_4^2\left( \frac{1}{f_1f_3} \right)^2 \pmod{4} \notag \\
 &=& f_4^2\left( \frac{f_8^2f_{12}^5}{f_2^2f_4f_6^4f_{24}^2} + q \frac{f_4^5f_{24}^2}{f_2^4f_6^2f_8^2f_{12}} \right)^2 \notag \\
 &=& f_4^2\left( \frac{f_8^4f_{12}^{10}}{f_2^4f_4^2f_6^8f_{24}^4} + q^2\frac{f_4^{10}f_{24}^4}{f_2^8f_6^4f_8^4f_{12}^2} + 2q\frac{f_8^2f_{12}^5}{f_2^2f_4f_6^4f_{24}^2}\cdot \frac{f_4^5f_{24}^2}{f_2^4f_6^2f_8^2f_{12}}\right)  \notag \\
 &=& \frac{f_8^4f_{12}^{10}}{f_2^4f_6^8f_{24}^4} + q^2\frac{f_4^{12}f_{24}^4}{f_2^8f_6^4f_8^4f_{12}^2} + 2q\frac{f_4^6f_{12}^4}{f_2^6f_6^6}. \label{4n0_4n2}
\end{eqnarray}
Collecting those terms above which are even functions of $q$ and replacing $q^2$ by $q$, we see that 
\begin{eqnarray*}
\sum_{n=0}^{\infty} PD_2(4n)q^n 
& \equiv & \frac{f_4^4f_{6}^{10}}{f_1^4f_3^8f_{12}^4} + q\frac{f_2^{12}f_{12}^4}{f_1^8f_3^4f_4^4f_{6}^2} \pmod{4} \\
&\equiv & 
\frac{f_2^8f_{6}^{10}}{f_2^2f_6^4f_{6}^8} + q\frac{f_4^{6}f_{6}^8}{f_4^2f_6^2f_4^4f_{6}^2} \pmod{4} \\
&\equiv & 
\frac{f_2^6}{f_6^2} + qf_{6}^4 \pmod{4} \\
&\equiv & 
\left(\frac{f_2^3}{f_6}\right)^2 +qf_{12}^2 \pmod{4} 
\end{eqnarray*}
after simplification.  Returning to (\ref{4n0_4n2}) and focusing on the odd powers of $q$, we see that 
\begin{eqnarray*}
 \sum_{n=0}^{\infty} PD_2(4n+2)q^{2n+1} 
 &\equiv & 2q\frac{f_4^6f_{12}^4}{f_2^6f_6^6} \pmod{4}
 \end{eqnarray*}
which implies 
\begin{eqnarray*}
 \sum_{n=0}^{\infty} PD_2(4n+2)q^{n} 
 &\equiv & 2\frac{f_2^6f_{6}^4}{f_1^6f_3^6} \pmod{4} \\
 &\equiv & 2\frac{f_2^6f_{6}^4}{f_2^3f_6^3} \pmod{4} \\
 &\equiv & 2f_2^3f_6 \pmod{4}
 \end{eqnarray*}
using Lemma \ref{general_congs_result}.  
\end{proof}
Thanks to Theorem \ref{4n_mod4}, we have the following corollaries:  
\begin{corollary}
\label{8n6_mod4}
For all $n\geq 0,$ $PD_2(8n+6) \equiv 0\pmod{4}.$
\end{corollary}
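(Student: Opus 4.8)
The plan is to read off the result directly from the second congruence in Theorem \ref{4n_mod4}. Writing $8n+6 = 4(2n+1) + 2$, the quantity $PD_2(8n+6)$ is exactly the coefficient of $q^{2n+1}$ in the generating function $\sum_{m\geq 0} PD_2(4m+2)q^m$, i.e.\ one of the \emph{odd-indexed} coefficients of that series.

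First I would invoke Theorem \ref{4n_mod4}, which gives $\sum_{m=0}^{\infty} PD_2(4m+2)q^m \equiv 2f_2^3 f_6 \pmod 4$. Next I would observe that $f_2^3$ is a power series in $q^2$ and $f_6$ is a power series in $q^6$, so their product $f_2^3 f_6$ is a power series in $q^2$; in particular every term of $f_2^3 f_6$ carries an even exponent. Consequently the coefficient of $q^{2n+1}$ in $2f_2^3 f_6$ is $2\cdot 0 = 0$ for every $n\geq 0$. Extracting the odd part of the congruence above then yields $PD_2(8n+6) \equiv 0 \pmod 4$, which is the claim.

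There is essentially no obstacle here: the only thing to be careful about is the bookkeeping $8n+6 = 4(2n+1)+2$ so that the desired residue is genuinely an odd-indexed coefficient of the $4m+2$ series, and the trivial parity observation that $2f_2^3 f_6$ supports only even powers of $q$. This is the same kind of argument used in Corollary \ref{4n3_6n3_6n5_mod4}, now applied to the $4m+2$ progression rather than the $2m+1$ progression.
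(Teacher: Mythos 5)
Your proposal is correct and follows exactly the paper's own argument: invoke the congruence $\sum_{m\geq 0} PD_2(4m+2)q^m \equiv 2f_2^3f_6 \pmod{4}$ from Theorem \ref{4n_mod4}, observe that $f_2^3f_6$ is an even function of $q$, and conclude that the odd-indexed coefficients, namely $PD_2(4(2n+1)+2) = PD_2(8n+6)$, vanish modulo 4. No differences worth noting.
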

\begin{proof}
From Theorem \ref{4n_mod4} we know 
$$\sum_{n=0}^{\infty} PD_2(4n+2)q^n \equiv 2f_2^3f_6 \pmod{4}$$ where $f_2^3f_6$ is an even function of $q.$  Therefore, for all $n\geq 0,$ 
$$
PD_2(4(2n+1)+2) = PD_2(8n+6) \equiv 0 \pmod{4}.
$$    
\end{proof}
Note that the result in Corollary \ref{8n6_mod4} also appears in \cite[Theorem 1.5]{BO}.  
\begin{corollary}
\label{8n0_8n4_mod4}   
We have 
\begin{eqnarray*}
\sum_{n=0}^{\infty} PD_2(8n)q^n & \equiv &  \left(\frac{f_1^3}{f_3}\right)^2 \pmod{4}  \textrm{\ \ \ and}
 \\
\sum_{n=0}^{\infty} PD_2(8n+4)q^n & \equiv & f_{6}^2 \pmod{4}.\\
\end{eqnarray*}
\end{corollary}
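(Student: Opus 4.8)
The plan is to read off both congruences directly from the mod-$4$ four-dissection in Theorem \ref{4n_mod4} by separating the even and odd parts of the generating function $\sum_{n\geq 0} PD_2(4n)q^n$. The key structural observation is that in
$$\sum_{n=0}^{\infty} PD_2(4n)q^n \equiv \left(\frac{f_2^3}{f_6}\right)^2 + q f_{12}^2 \pmod{4},$$
the first summand equals $f_2^6/f_6^2$, which is a power series in $q^2$ alone (both $f_2$ and $f_6$ involve only even powers of $q$), while the second summand $q f_{12}^2$ is supported entirely on odd exponents (indeed, on exponents of the form $12k+1$). Hence the two terms occupy complementary residue classes modulo $2$, and the $2$-dissection is immediate without any further $q$-series identity.

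First I would extract the even part. Since $PD_2(4(2n)) = PD_2(8n)$, collecting the even powers of $q$ gives
$$\sum_{n=0}^{\infty} PD_2(8n)q^{2n} \equiv \frac{f_2^6}{f_6^2} \pmod{4},$$
and replacing $q^2$ by $q$ yields $\sum_{n\geq 0} PD_2(8n)q^n \equiv f_1^6/f_3^2 = \left(f_1^3/f_3\right)^2 \pmod{4}$, the first claimed congruence. Next I would extract the odd part: since $PD_2(4(2n+1)) = PD_2(8n+4)$, collecting the odd powers of $q$ gives
$$\sum_{n=0}^{\infty} PD_2(8n+4)q^{2n+1} \equiv q f_{12}^2 \pmod{4},$$
so $\sum_{n\geq 0} PD_2(8n+4)q^{2n} \equiv f_{12}^2 \pmod{4}$, and replacing $q^2$ by $q$ gives $\sum_{n\geq 0} PD_2(8n+4)q^n \equiv f_6^2 \pmod{4}$, the second claimed congruence.

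There is essentially no obstacle here; no new dissection lemma is needed, only the observation about which powers of $q$ occur in each summand of Theorem \ref{4n_mod4}. The only point requiring a moment's care is the bookkeeping of which arithmetic progression each half of the dissection produces, namely that the even/odd split of the index $4n$ corresponds to $8n$ and $8n+4$, and that the rescaling $q^2 \mapsto q$ sends $f_2, f_6, f_{12}$ to $f_1, f_3, f_6$ respectively.
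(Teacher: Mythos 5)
Your proposal is correct and matches the paper's own argument: the paper likewise observes that the right-hand side of Theorem \ref{4n_mod4} is already 2-dissected and simply splits into even and odd parts, replacing $q^2$ by $q$. Your write-up just spells out the bookkeeping in more detail.
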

\begin{proof}
Thanks to Theorem \ref{4n_mod4}
we know  
$$
\sum_{n=0}^{\infty} PD_2(4n)q^n  \equiv  \left(\frac{f_2^3}{f_6}\right)^2 +qf_{12}^2 \pmod{4}  
$$
and the right--hand side above is already 2--dissected.  Therefore, the corollary follows by simply splitting the result for the generating function congruence for $PD_2(4n)$ into its even and odd parts and replacing $q^2$ by $q$. 
\end{proof}
Corollary \ref{8n0_8n4_mod4} also leads us to our next Ramanujan--like divisibility property.  
\begin{corollary}
\label{16n12_mod4}
For all $n\geq 0,$ $PD_2(16n+12) \equiv 0 \pmod{4}.$
\end{corollary}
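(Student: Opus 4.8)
The plan is to read off the claim directly from Corollary \ref{8n0_8n4_mod4}, exactly as Corollaries \ref{4n3_6n3_6n5_mod4} and \ref{8n6_mod4} were deduced from a generating function supported on a sublattice of the exponents. First I would recall from Corollary \ref{8n0_8n4_mod4} that
\begin{equation*}
\sum_{n=0}^{\infty} PD_2(8n+4)q^n \equiv f_6^2 \pmod{4},
\end{equation*}
and observe that, once expanded as a power series, $f_6^2$ involves only powers of $q$ whose exponents are multiples of $6$; in particular every such exponent is even.

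Next I would rewrite the target progression in terms of the coefficient extracted above: since $16n+12 = 8(2n+1)+4$, the quantity $PD_2(16n+12)$ is precisely the coefficient of $q^{2n+1}$ in $\sum_{n=0}^{\infty} PD_2(8n+4)q^n$. Because $2n+1$ is odd for every $n\geq 0$, it can never equal a multiple of $6$, so the coefficient of $q^{2n+1}$ in $f_6^2$ is $0$. Hence that coefficient is $\equiv 0 \pmod 4$, which gives $PD_2(16n+12) \equiv 0 \pmod 4$ for all $n\geq 0$, as required.

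I do not anticipate any genuine obstacle here: the entire content of the argument is the modulo-$4$ identity for the generating function of $PD_2(8n+4)$ already established in Corollary \ref{8n0_8n4_mod4}, together with the trivial parity observation that an odd exponent cannot be divisible by $6$. The only thing to be careful about is bookkeeping the index shift $16n+12 = 8(2n+1)+4$ correctly so that one is indeed extracting odd-indexed coefficients from a series supported on multiples of $6$.
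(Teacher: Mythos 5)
Your proposal is correct and is essentially identical to the paper's own proof: both extract the odd-indexed coefficients of $\sum_{n\geq 0} PD_2(8n+4)q^n \equiv f_6^2 \pmod 4$ via the substitution $16n+12 = 8(2n+1)+4$ and note that $f_6^2$ is an even function of $q$. No issues.
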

\begin{proof}
From Corollary \ref{8n0_8n4_mod4}, we see that 
$$
\sum_{n=0}^{\infty} PD_2(8n+4)q^n  \equiv f_{6}^2 \pmod{4}.
$$
Because the right--hand side of this congruence is an even function of $q$, we know that, for all $n\geq 0$,  
$$
PD_2(8(2n+1)+4) = PD_2(16n+12)\equiv 0 \pmod{4}.
$$
\end{proof}
Note that the congruence in Corollary \ref{16n12_mod4} was one of the congruences conjectured in \cite{Her}.  

We now need only one additional tool in order to complete our proof of Theorem \ref{infinite_family_mod4}.  The following theorem provides an ``internal congruence'' modulo 4 which is satisfied by $PD_2(n)$.  

\begin{theorem}
\label{internal_cong_mod4}
For all $n\geq 0,$ $PD_2(4n) \equiv PD_2(n) \pmod{4}.$
\end{theorem}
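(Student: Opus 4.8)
The plan is to split on the parity of $n$ and, in each case, simply match generating functions that have already been computed modulo $4$ in the results above.

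First I would dispose of the case $n$ odd, say $n = 2m+1$, so that $4n = 8m+4$ and the claim reads $PD_2(8m+4) \equiv PD_2(2m+1) \pmod 4$. This is immediate: by the second congruence of Corollary \ref{8n0_8n4_mod4} we have $\sum_{m \ge 0} PD_2(8m+4)q^m \equiv f_6^2 \pmod 4$, while the second congruence of Theorem \ref{2n_mod4} gives $\sum_{m \ge 0} PD_2(2m+1)q^m \equiv f_6^2 \pmod 4$. Comparing coefficients of $q^m$ finishes this case with no further computation.

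Next I would treat the case $n$ even, say $n = 2m$, so that $4n = 8m$ and the claim becomes $PD_2(8m) \equiv PD_2(2m) \pmod 4$. By the first congruence of Corollary \ref{8n0_8n4_mod4}, $\sum_{m \ge 0} PD_2(8m)q^m \equiv \left(f_1^3/f_3\right)^2 \pmod 4$, and by the first congruence of Theorem \ref{2n_mod4}, $\sum_{m \ge 0} PD_2(2m)q^m \equiv f_4^2/(f_1^2 f_3^2) \pmod 4$. Since $f_1$ and $f_3$ are units in $\mathbb{Z}[[q]]$, the congruence $\left(f_1^3/f_3\right)^2 \equiv f_4^2/(f_1^2 f_3^2) \pmod 4$ is equivalent, after multiplying through by $f_1^2 f_3^2$, to $f_1^8 \equiv f_4^2 \pmod 4$. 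This last fact follows from two applications of Lemma \ref{general_congs_result} with $p = 2$: first $f_1^8 = f_1^{2^2 \cdot 2} \equiv f_2^{2 \cdot 2} = f_2^4 \pmod 4$, and then $f_2^4 = f_2^{2^2 \cdot 1} \equiv f_4^{2} \pmod 4$. Hence $\sum_{m \ge 0} PD_2(8m)q^m \equiv \sum_{m \ge 0} PD_2(2m)q^m \pmod 4$, which settles the even case, and combining the two cases gives $PD_2(4n) \equiv PD_2(n) \pmod 4$ for all $n \ge 0$.

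There is no serious obstacle here: the odd case is an exact match of previously derived generating functions, and the even case reduces to the standard congruence $f_1^8 \equiv f_4^2 \pmod 4$, a direct consequence of Lemma \ref{general_congs_result}. The only point that merits a word of care is the legitimacy of clearing the factors $f_1^2 f_3^2$ from a congruence modulo $4$, which is valid precisely because these are invertible power series over $\mathbb{Z}$, so the implication runs in both directions.
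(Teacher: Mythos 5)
Your proof is correct, but it takes a genuinely different route from the paper. The paper's argument is a two-line comparison: it quotes the congruence $\sum_{n\ge 0} PD_2(n)q^n \equiv \left(f_2^3/f_6\right)^2 + qf_{12}^2 \pmod{4}$ from an external source (Lemma 4 of Herden et al.) and observes that this is literally the same expression as the one derived in (\ref{4diss}) for $\sum_{n\ge 0} PD_2(4n)q^n$, so the two series agree coefficient-by-coefficient modulo $4$. You instead split on the parity of $n$ and match the subprogressions $8m+4$ against $2m+1$ and $8m$ against $2m$ using only Theorem \ref{2n_mod4} and Corollary \ref{8n0_8n4_mod4}, which are proved within the paper; the odd case is an exact match ($f_6^2$ on both sides), and the even case correctly reduces, after clearing the unit $f_1^2f_3^2$, to $f_1^8 \equiv f_4^2 \pmod{4}$, which follows from two applications of Lemma \ref{general_congs_result} exactly as you describe. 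What your approach buys is self-containedness --- it removes the one external citation in the paper's proof and keeps everything inside the dissection machinery already developed --- at the cost of a case split and a small extra computation; what the paper's approach buys is brevity, and implicitly a $2$-dissection of $\sum PD_2(n)q^n$ itself modulo $4$, which is a slightly stronger statement than the internal congruence alone. Your remark about the legitimacy of cancelling $f_1^2f_3^2$ is exactly the right point of care, and your justification (these are units in $\mathbb{Z}[[q]]$, so multiplication preserves coefficientwise congruences in both directions) is sound.
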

\begin{proof}
From (\ref{4diss}) above, we know that 
$$\sum_{n=0}^\infty PD_2(4n)q^n \equiv  \left(\frac{f_2^3}{f_6}\right)^2 +qf_{12}^2 \pmod{4}.$$  
From \cite[Lemma 4]{Her}, we also know 
$$\sum_{n=0}^\infty PD_2(n)q^n \equiv  \left(\frac{f_2^3}{f_6}\right)^2 +qf_{12}^2 \pmod{4}.$$  The result follows.  
\end{proof}

We now have all of the tools needed to provide a proof of Theorem \ref{infinite_family_mod4}.  
\begin{proof}
(of Theorem \ref{infinite_family_mod4})  Note that the $\alpha=0$ case is proven in Corollary \ref{4n3_6n3_6n5_mod4}, while the $\alpha = 1$ case appears in Corollary \ref{8n6_mod4}.   We then see that, thanks to Theorem \ref{internal_cong_mod4}, we have (mod 4)
$$
0 \equiv PD_2(4n+3) \equiv PD_2(4(4n+3)) \equiv PD_2(4^2(4n+3)) \equiv PD_2(4^3(4n+3)) \dots
$$
which takes care of all of the cases with an even power of $\alpha.$  For the cases where $\alpha$ is odd, we note that  $8n+6 = 2(4n+3),$ and this then gives us (mod 4) 
$$
0 \equiv PD_2(2(4n+3)) \equiv PD_2(4\cdot 2(4n+3)) \equiv PD_2(4^2\cdot 2(4n+3)) \equiv  PD_2(4^3\cdot 2(4n+3)) \dots
$$
\end{proof}
We now have a proof of an infinite family of congruences modulo 4 satisfied by $PD_2$ which encompasses two of the conjectured congruences in \cite{Her}.  

We close this section by noting that, thanks to Corollary \ref{4n3_6n3_6n5_mod4} and Theorem \ref{internal_cong_mod4}, we now know the following:  
\begin{theorem}
\label{infinite_family_mod4_6n5}
For all $\alpha \geq 0$ and all $n\geq 0,$
$$PD_2(4^\alpha(6n+5)) \equiv 0 \pmod{4}.$$
\end{theorem}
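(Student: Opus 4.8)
The plan is to proceed by induction on $\alpha$, using the internal congruence of Theorem \ref{internal_cong_mod4} as the engine that propagates the base case upward. The base case $\alpha = 0$ is precisely the third congruence recorded in Corollary \ref{4n3_6n3_6n5_mod4}: for all $n \geq 0$, $PD_2(6n+5) \equiv 0 \pmod{4}$. This, in turn, was deduced from the fact (Theorem \ref{2n_mod4}) that $\sum_{n\geq 0} PD_2(2n+1)q^n \equiv f_6^2 \pmod 4$ is supported on exponents divisible by $6$, so no new input is needed here.

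For the inductive step, suppose that for some fixed $\alpha \geq 0$ we have $PD_2(4^\alpha(6n+5)) \equiv 0 \pmod 4$ for all $n \geq 0$. Applying Theorem \ref{internal_cong_mod4} with $n$ replaced by $4^\alpha(6n+5)$ yields
$$PD_2\bigl(4^{\alpha+1}(6n+5)\bigr) = PD_2\bigl(4 \cdot 4^{\alpha}(6n+5)\bigr) \equiv PD_2\bigl(4^{\alpha}(6n+5)\bigr) \equiv 0 \pmod{4},$$
which is exactly the assertion for $\alpha+1$. Alternatively, one can avoid the formal induction entirely and simply iterate the internal congruence, exactly as in the proof of Theorem \ref{infinite_family_mod4}, writing $0 \equiv PD_2(6n+5) \equiv PD_2(4(6n+5)) \equiv PD_2(4^2(6n+5)) \equiv \cdots \pmod 4$.

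I do not expect any genuine obstacle: the substantive content has already been established, namely the internal congruence $PD_2(4n) \equiv PD_2(n) \pmod 4$ of Theorem \ref{internal_cong_mod4}, whose proof rests on matching the mod-$4$ generating function for $PD_2(4n)$ from \eqref{4diss} against the mod-$4$ generating function for $PD_2(n)$ from \cite[Lemma 4]{Her}. Granting that identity together with Corollary \ref{4n3_6n3_6n5_mod4}, the theorem follows purely formally, so the only thing worth double-checking is that the arithmetic progression $6n+5$ is genuinely preserved under multiplication by $4$ (it is, since the congruence class is tracked inside the argument of $PD_2$, not reduced).
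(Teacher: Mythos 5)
Your proof is correct and follows exactly the route the paper intends: the base case is the congruence $PD_2(6n+5) \equiv 0 \pmod 4$ from Corollary \ref{4n3_6n3_6n5_mod4}, and iterating the internal congruence $PD_2(4n) \equiv PD_2(n) \pmod 4$ of Theorem \ref{internal_cong_mod4} lifts it to all powers $4^\alpha$. The paper states this result with precisely that one-line justification, so your write-up is, if anything, more detailed than the original.
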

This is another satisfying infinite family of congruences modulo 4, and we note that the $\alpha=1$ case of this family is the second conjecture that appeared in the list at the end of \cite{Her}.  

\section{An Infinite Family of Congruences Modulo 8}
We now transition to the proof of a related family of Ramanujan--like congruences satisfied by $PD_2,$ this time modulo 8, which was stated in Theorem \ref{infinite_family_mod8}.  As a reminder, that theorem states that, for all $\alpha \geq 0$ and all $n\geq 0,$
$$PD_2(2^\alpha(8n+7)) \equiv 0 \pmod{8}.$$  
We will prove Theorem \ref{infinite_family_mod8} in an analogous manner to that used above; namely, we will directly prove a few ``base'' cases of the congruence, and then we will prove an internal congruence modulo 8 which will serve as the mechanism by which we can allow $\alpha $ to grow arbitrarily large.  

To begin this process, we prove the $\alpha =0$ case of the above theorem.  
\begin{theorem}
\label{8n7_mod8}
 For all $n\geq 0,$ $PD_2(8n+7) \equiv 0 \pmod{8}.$   
\end{theorem}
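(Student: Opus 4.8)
The plan is to extract the generating function for $PD_2(8n+7)$ starting from the congruence $\sum_{n\geq0}PD_2(2n+1)q^n\equiv f_6^2\pmod 4$ does not directly help here since we need information modulo $8$; instead I would begin one level earlier, working modulo $8$ rather than modulo $4$. First I would use Theorem \ref{4-dissections}, which gives an \emph{exact} identity
$$\sum_{n=0}^\infty PD_2(4n+3)q^n = 4\,\frac{f_4^4f_6^2}{f_1^4f_3^2}.$$
Because the right-hand side already carries a factor of $4$, to obtain a congruence modulo $8$ for the subsequence $PD_2(8n+7)$ (i.e.\ the odd-index part of $PD_2(4n+3)$, since $8n+7 = 4(2n+1)+3$) it suffices to show that the coefficient of $q^{2n+1}$ in $\dfrac{f_4^4f_6^2}{f_1^4f_3^2}$ is even for every $n$. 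Equivalently, I must show that $\dfrac{f_4^4f_6^2}{f_1^4f_3^2}$, reduced modulo $2$, is a function of $q^2$ only (all odd-exponent coefficients vanish mod $2$).

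To do this I would compute the $2$-dissection of $\dfrac{f_4^4f_6^2}{f_1^4f_3^2}$ using the tools already assembled: apply Lemma \ref{L4} to $\dfrac{1}{f_1^4}$ and the first identity of Lemma \ref{L3} to $\dfrac{1}{f_1f_3}$—or, more economically, note that modulo $2$ we have $f_1^4\equiv f_2^2$ and $f_3^2\equiv f_6$ by Lemma \ref{general_congs_result}, so
$$\frac{f_4^4f_6^2}{f_1^4f_3^2}\equiv \frac{f_4^4f_6^2}{f_2^2f_6}=\frac{f_4^4f_6}{f_2^2}\pmod 2.$$
Now $\dfrac{f_4^4f_6}{f_2^2}$ is manifestly a power series in $q^2$: every factor $f_2$, $f_4$, $f_6$ is itself a power series in $q^2$. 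Hence modulo $2$ the series $\dfrac{f_4^4f_6^2}{f_1^4f_3^2}$ contains only even powers of $q$, so its coefficient of $q^{2n+1}$ is even for all $n$. Multiplying by the external factor $4$ then yields $PD_2(8n+7)\equiv 0\pmod 8$ for all $n\geq 0$.

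I do not expect a serious obstacle here; the only subtlety is the bookkeeping of indices—confirming that the $q^{2n+1}$ coefficient of $\dfrac{f_4^4f_6^2}{f_1^4f_3^2}$ corresponds precisely to $PD_2(8n+7)$ via $8n+7=4(2n+1)+3$, and that the factor of $4$ upgrades "even coefficient" to "divisible by $8$." If one prefers to avoid even the mod-$2$ shortcut, the same conclusion follows by carrying out the full $2$-dissection of $\dfrac{f_4^4f_6^2}{f_1^4f_3^2}$ via Lemmas \ref{L4} and \ref{L3} and checking that the odd part has all coefficients even; this is a routine (if slightly longer) generating-function manipulation of exactly the type used to prove Theorem \ref{4n_mod4}.
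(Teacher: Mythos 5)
Your proposal is correct and follows essentially the same route as the paper: both isolate the exact identity $\sum_{n\ge 0}PD_2(4n+3)q^n = 4f_4^4f_6^2/(f_1^4f_3^2)$ (the paper re-derives it from Theorem \ref{2-dissections} and Lemma \ref{L4}, while you cite Theorem \ref{4-dissections} directly), then reduce the cofactor via $f_1^4\equiv f_2^2$ and $f_3^2\equiv f_6$ to see that $4f_4^4f_6/f_2^2$ is an even function of $q$, which kills the coefficients at $8n+7$ modulo $8$. No gaps.
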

\begin{proof}
Thanks to Theorem \ref{2-dissections} as well as Lemma \ref{L4}, we see that 
\begin{eqnarray*}
\sum_{n=0}^{\infty} PD_2(2n+1)q^{n} 
&=&
\frac{1}{f_1^4}\left( \frac{f_2^6f_{12}^2}{f_4^2f_{6}^2}  \right)  \\
&=&
\left( \frac{f_4^{14}}{f_2^{14}f_8^4} + 4q\frac{f_4^2f_8^4}{f_2^{10}} \right)\left( \frac{f_2^6f_{12}^2}{f_4^2f_{6}^2}  \right).
\end{eqnarray*}
Thus, we see that 
$$
\sum_{n=0}^{\infty} PD_2(4n+3)q^{2n+1} = 4q\frac{f_8^4f_{12}^2}{f_2^4f_6^2} 
$$
or 
$$
\sum_{n=0}^{\infty} PD_2(4n+3)q^{n} = 4\frac{f_4^4f_{6}^2}{f_1^4f_3^2} \equiv  4\frac{f_4^4f_{6}^2}{f_2^2f_6} \pmod{8}.
$$
Because the function $\frac{f_4^4f_{6}^2}{f_2^2f_6} = \frac{f_4^4f_{6}}{f_2^2} $ is an even function of $q$, we immediately conclude that, for all $n\geq 0,$ $PD_2(4(2n+1)+3) = PD_2(8n+7) \equiv 0 \pmod{8}.$
\end{proof}
Next, we prove the $\alpha=1$ case of Theorem \ref{infinite_family_mod8}.

\begin{theorem}
\label{16n14_mod8}
 For all $n\geq 0,$ $PD_2(16n+14) \equiv 0 \pmod{8}.$   
\end{theorem}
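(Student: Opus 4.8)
\begin{proof}[Proof plan for Theorem~\ref{16n14_mod8}]
The plan is to reduce the statement to a coefficient extraction from the exact generating function $\sum_{n\geq 0}PD_2(4n+2)q^n=2\frac{f_2^6f_6^2}{f_1^6f_3^2}$ supplied by Theorem~\ref{4-dissections}. Since $16n+14=4(4n+3)+2$, we have $PD_2(16n+14)=2\cdot[q^{4n+3}]\frac{f_2^6f_6^2}{f_1^6f_3^2}$, so it suffices to prove that the coefficient of $q^{4n+3}$ in $G(q):=\frac{f_2^6f_6^2}{f_1^6f_3^2}$ is divisible by $4$.
\end{proof}

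First I would strip the sixth power in the denominator: by Lemma~\ref{general_congs_result} with $p=2$ and $j=2$ we have $f_1^4\equiv f_2^2\pmod 4$, hence $\frac{1}{f_1^4}\equiv\frac{1}{f_2^2}\pmod 4$ and therefore $G(q)\equiv f_2^4f_6^2\bigl(\frac{1}{f_1f_3}\bigr)^2\pmod 4$. Next I would square the first $2$-dissection identity in Lemma~\ref{L3} and multiply through by $f_2^4f_6^2$; after cancellation the three resulting summands are $\frac{f_8^4f_{12}^{10}}{f_4^2f_6^6f_{24}^4}$, $\;2q\frac{f_4^4f_{12}^4}{f_2^2f_6^4}$, and $q^2\frac{f_4^{10}f_{24}^4}{f_2^4f_6^2f_8^4f_{12}^2}$, and in each of these the ``main factor'' is a power series in $q^2$. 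Reading off the part supported on odd exponents therefore gives, modulo $4$, that the odd part of $G(q)$ is $2q\frac{f_4^4f_{12}^4}{f_2^2f_6^4}$. Since $\frac{f_4^4f_{12}^4}{f_2^2f_6^4}$ is a series in $q^2$, say $H(q^2)$ with $H(q)=\frac{f_2^4f_6^4}{f_1^2f_3^4}$, the coefficient of $q^{4n+3}$ in $G$ is congruent mod $4$ to $2\cdot[q^{2n+1}]H(q)$.

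The last step is to show $[q^{2n+1}]H(q)$ is even, which closes the argument because $2\cdot(\text{even})\equiv 0\pmod 4$. For this I would reduce $H$ modulo $2$ using $f_m^2\equiv f_{2m}\pmod 2$ (Lemma~\ref{general_congs_result}, $p=2$, $j=1$), which after applying it to $f_2^4$, $f_6^4$, $f_1^2$, and $f_3^4$ yields $H(q)\equiv\frac{f_4^2f_{12}^2}{f_2f_6^2}\pmod 2$, a power series in $q^2$; hence all of its odd-indexed coefficients vanish mod $2$. Assembling the chain, $[q^{4n+3}]G\equiv 0\pmod 4$, and therefore $PD_2(16n+14)=2\cdot[q^{4n+3}]G\equiv 0\pmod 8$.

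I expect the only delicate point to be the bookkeeping rather than any single manipulation: one must keep straight which intermediate series are functions of $q^2$, where the progression $q^{4n+3}$ sits inside a twice-dissected series, and how the factor of $2$ pulled out of the first $2$-dissection interacts with the mod-$2$ information obtained in the last step, so that mod-$2$ control of $H$ suffices for mod-$4$ control of $G$ and hence mod-$8$ control of $PD_2(16n+14)$. The generating-function steps themselves are routine — squarings, cancellations, and applications of Lemmas~\ref{L3} and~\ref{general_congs_result} — but an off-by-one in the exponent arithmetic of the nested dissections would derail the argument, so that is where I would be most careful.
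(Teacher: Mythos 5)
Your proposal is correct and follows essentially the same route as the paper: both start from $\sum_{n\geq 0}PD_2(4n+2)q^n = 2\frac{f_2^6f_6^2}{f_1^6f_3^2}$ and perform two successive $2$-dissections, the first via the squared identity for $\frac{1}{f_1f_3}$ from Lemma~\ref{L3} and the second by observing that the surviving piece is (modulo the relevant power of $2$) a series in $q^2$. The only cosmetic differences are that you dispatch $\frac{1}{f_1^4}$ with $f_1^4\equiv f_2^2\pmod 4$ from Lemma~\ref{general_congs_result} where the paper invokes the exact dissection of Lemma~\ref{L4} and discards its $4q$-term modulo $8$, and that you track everything as coefficient extraction modulo $4$ rather than carrying the overall factor of $2$ through a modulo-$8$ computation.
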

\begin{proof}
Using Theorem \ref{4-dissections} and Lemmas \ref{L3} and \ref{L4}, we have 
\begin{eqnarray*}
\sum_{n=0}^{\infty} PD_2(4n+2)q^{n} 
&=&
2\frac{f_2^6f_6^2}{f_1^6f_3^2}  \\
&=& 
2f_2^6f_6^2\left( \frac{1}{f_1^4} \right)\left(\frac{1}{f_1f_3}\right)^2 \\
&=&
2f_2^6f_6^2\left( \frac{f_4^{14}}{f_2^{14}f_{8}^4} + 4q \frac{f_4^2f_{8}^4}{f_2^{10}} \right)\left( \frac{f_8^2f_{12}^5}{f_2^2f_4f_6^4f_{24}^2} + q \frac{f_4^5f_{24}^2}{f_2^4f_6^2f_8^2f_{12}}  \right)^2 \\
&\equiv &
2\left( \frac{f_4^{14}f_6^2}{f_2^{8}f_{8}^4}  \right)\left( \frac{f_8^2f_{12}^5}{f_2^2f_4f_6^4f_{24}^2} + q \frac{f_4^5f_{24}^2}{f_2^4f_6^2f_8^2f_{12}}  \right)^2 \pmod{8}.
\end{eqnarray*}
Expanding the above and keeping only those terms where the powers of $q$ are odd yields 
\begin{eqnarray*}
\sum_{n=0}^{\infty} PD_2(8n+6)q^{2n+1} 
&\equiv &
2\left( \frac{f_4^{14}f_6^2}{f_2^{8}f_{8}^4}  \right)\left(2q \frac{f_8^2f_{12}^5}{f_2^2f_4f_6^4f_{24}^2} \cdot \frac{f_4^5f_{24}^2}{f_2^4f_6^2f_8^2f_{12}}  \right) \pmod{8} \\
&\equiv &
4q\frac{f_4^{18}f_{12}^4}{f_2^{14}f_6^4f_8^4} \pmod{8}
\end{eqnarray*}    
which implies that 
\begin{eqnarray*}
\sum_{n=0}^{\infty} PD_2(8n+6)q^{n} 
&\equiv &
4\frac{f_2^{18}f_{6}^4}{f_1^{14}f_3^4f_4^4} \pmod{8} \\
&\equiv &
4\frac{f_2^{11}f_{6}^2}{f_4^4} \pmod{8}.
\end{eqnarray*}    
Since the last expression above is an even function of $q$, we immediately know that, for all $n\geq 0$, 
$$PD_2(8(2n+1)+6) = PD_2(16n+14) \equiv 0\pmod{8}.$$
    
\end{proof}
We next prove the $\alpha=2$ case of Theorem \ref{infinite_family_mod8}.  

\begin{theorem}
\label{32n28_mod8}
 For all $n\geq 0,$ $PD_2(32n+28) \equiv 0 \pmod{8}.$   
\end{theorem}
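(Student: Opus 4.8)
\textbf{Proof proposal for Theorem \ref{32n28_mod8}.} The plan is to follow the same template as the proofs of Theorems \ref{16n14_mod8} and \ref{8n7_mod8}: produce a generating function for $PD_2(16n+12)$ modulo $8$, dissect it, and read off the subprogression $16(2n+1)+12 = 32n+28$. First I would obtain $\sum_{n\geq 0} PD_2(16n+12)q^n$ modulo $8$. The natural starting point is the $4$--dissection entry $\sum_{n\geq 0} PD_2(4n+3)q^n = 4\frac{f_4^4f_6^2}{f_1^4f_3^2}$ from Theorem \ref{4-dissections}, since $16n+12 = 4(4n+3)$. Because of the overall factor of $4$, everything is being computed modulo $8$, so I only need the relevant dissection pieces modulo $2$: I would apply Lemma \ref{general_congs_result} to write $\frac{1}{f_1^4}\equiv \frac{1}{f_2^2}\pmod 2$ and $\frac{1}{f_3^2}\equiv\frac{1}{f_6}\pmod 2$, reducing $4\frac{f_4^4f_6^2}{f_1^4f_3^2}$ to $4\frac{f_4^4 f_6}{f_2^2}\pmod 8$. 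This is already an even function of $q$, so extracting the odd part (the $4n+3$, i.e. $16n+12$, terms relative to the original) is immediate, and more precisely I would need the $2$--dissection of $\frac{f_4^4f_6}{f_2^2}$ to pick out which residue of $4n+3$ lands on $16n+12$.

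More carefully: from $\sum PD_2(4n+3)q^n\equiv 4\frac{f_4^4f_6}{f_2^2}\pmod 8$, the right side is a function of $q^2$, so $PD_2(4(2n)+3)=PD_2(8n+3)$ absorbs all of it and $PD_2(8n+7)\equiv 0$ (recovering Theorem \ref{8n7_mod8}). Writing $\frac{f_4^4f_6}{f_2^2} = \sum a_n q^{2n}$, I get $\sum_{n\geq 0} PD_2(8n+3)q^n \equiv 4\frac{f_2^4 f_3}{f_1^2}\pmod 8$. Then I would $2$--dissect $\frac{f_2^4 f_3}{f_1^2}$: since I am working modulo $2$ inside the factor of $4$, I can use $\frac{1}{f_1^2}\equiv \frac{1}{f_2}\pmod 2$ to get $\frac{f_2^4 f_3}{f_1^2}\equiv f_2^3 f_3\pmod 2$, hence $\sum PD_2(8n+3)q^n\equiv 4 f_2^3 f_3\pmod 8$. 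Now I dissect $f_2^3 f_3$ using Lemma \ref{L3} in the form $f_1 f_3 = \frac{f_2 f_8^2 f_{12}^4}{f_4^2 f_6 f_{24}^2} - q\frac{f_4^4 f_6 f_{24}^2}{f_2 f_8^2 f_{12}^2}$ (with $q\to q^2$ to handle $f_2^3 f_3 = (f_2^2)(f_2 f_3)$, applying the replacement $q\mapsto q^2$ so that the $f_1 f_3$ identity becomes an $f_2 f_6$ identity, then multiplying by the function-of-$q^2$ factor $f_2^2$). Keeping the appropriate parity of $q$ and replacing $q^2$ by $q$ gives $\sum PD_2(16n+r)q^n\pmod 8$ for the two relevant values of $r$, and in particular a generating function for $PD_2(16n+12)\pmod 8$. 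That generating function should again turn out to be $4$ times an even function of $q$ (by the same $f_{\text{even}}$ structure forced by Lemma \ref{L3}'s even/odd split), whence $PD_2(16(2n+1)+12)=PD_2(32n+28)\equiv 0\pmod 8$.

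The main obstacle I anticipate is purely bookkeeping: tracking which of the two $2$--dissection terms at each stage carries the progression $\equiv 12 \pmod{16}$ back through the chain $16n+12 \leftarrow 4n+3 \leftarrow 8n+3 \leftarrow 16n+r$, and making sure the $q$--shifts ($q^{2n+1}$ versus $q^{2n}$, and the $q\mapsto q^2$ substitutions in the lemmas) are applied consistently. There is also a small subtlety in whether one should route through $PD_2(16n+12)$ via the $\alpha=2$ relabeling $32n+28 = 2(16n+14)$ and the mod-$8$ internal congruence instead; indeed, once the internal congruence modulo $8$ (the analogue of Theorem \ref{internal_cong_mod4}) is in hand this theorem is subsumed, so an alternative and shorter route is to prove $PD_2(4n)\equiv PD_2(n)\pmod 8$ first and then deduce $PD_2(32n+28) = PD_2(4(8n+7))\equiv PD_2(8n+7)\equiv 0\pmod 8$ from Theorem \ref{8n7_mod8} directly. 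I expect the paper to use one of these two approaches; the direct dissection is the more self-contained choice and is the one I would write out, with the internal-congruence shortcut mentioned as the conceptual reason the family propagates.
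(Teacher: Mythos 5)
There is a genuine gap, and it occurs at the very first step: you conflate the arithmetic progression $\{4n+3\}$ with the set $\{4(4n+3)\}=\{16n+12\}$. The series $\sum_{n\geq 0}PD_2(4n+3)q^n$ has as its coefficients the values $PD_2(m)$ for $m\equiv 3\pmod 4$, and since $16n+12\equiv 0\pmod 4$, the quantity $PD_2(16n+12)$ never appears in that series. Consequently your chain $16n+12\leftarrow 4n+3\leftarrow 8n+3\leftarrow 16n+r$ terminates at $r\in\{3,11\}$ and can never produce a generating function for $PD_2(16n+12)$. The correct entry point is $\sum_{n\geq 0}PD_2(4n)q^n$, whose coefficient of $q^{4n+3}$ is $PD_2(4(4n+3))=PD_2(16n+12)$; this is what the paper does, reducing that series to $\left(\frac{f_4}{f_1f_3}\right)^4+q\left(\frac{f_{12}}{f_6}\right)^4\pmod 8$ via Lemma \ref{general_congs_result}, then dissecting twice with Lemmas \ref{L3} and \ref{L4} to reach $\sum PD_2(16n+12)q^n\equiv 4f_1f_2f_3f_6^2+4qf_6^7\pmod 8$, and once more with Lemma \ref{L3} to find $\sum PD_2(32n+28)q^n\equiv 4f_3^7+4f_3^7\equiv 0\pmod 8$. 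Note that the final step is a genuine cancellation of two terms, not an ``even function of $q$'' argument as your template anticipates. (Your recovery of $PD_2(8n+7)\equiv 0\pmod 8$ along the way is correct and matches the paper's Theorem \ref{8n7_mod8}.)

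Two secondary problems would also block your write-up even for the progressions you can reach. First, the proposed dissection of $f_2^3f_3$ via Lemma \ref{L3} with $q\mapsto q^2$ does not work: that substitution turns the $f_1f_3$ identity into an $f_2f_6$ identity, and $f_2f_3\neq f_2f_6$; what you would actually need is a $2$--dissection of $f_3$ alone, which is not among the paper's lemmas. Second, the suggested shortcut via an internal congruence $PD_2(4n)\equiv PD_2(n)\pmod 8$ rests on a false statement: $PD_2(4)=5$ and $PD_2(1)=1$, so $PD_2(4n)\not\equiv PD_2(n)\pmod 8$ already at $n=1$. The paper's internal congruence modulo $8$ is the weaker $PD_2(16n)\equiv PD_2(4n)\pmod 8$ (Theorem \ref{internal_cong_mod8}), which propagates upward from the base cases $\alpha=2,3$ but cannot be used to derive the $\alpha=2$ case from $\alpha=0$; this is precisely why the paper must prove Theorem \ref{32n28_mod8} by direct dissection.
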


\begin{proof}
Thanks to Theorem \ref{4-dissections}, we know 
\begin{eqnarray}
\sum_{n=0}^\infty PD_2(4n)q^n 
&=& 
\frac{f_4^4f_6^8}{f_1^4f_3^4f_{12}^4} + q\frac{f_2^{12}f_{12}^4}{f_1^8f_4^4f_6^4} \notag \\
&\equiv & 
\left( \frac{f_4}{f_1f_3} \right)^4 + q\left( \frac{f_{12}}{f_6} \right)^4 \pmod{8} \ \ \textrm{using Lemma \ref{general_congs_result}} \label{4n_mod8_overall}\\
&=& 
f_4^4\left( \frac{f_8^2f_{12}^5}{f_2^2f_4f_6^4f_{24}^2} + q \frac{f_4^5f_{24}^2}{f_2^4f_6^2f_8^2f_{12}}  \right)^4 + q\frac{f_{12}^4}{f_6^4}. \label{4n_mod8}
\end{eqnarray}
Thus, 
\begin{eqnarray*}
&&
\sum_{n=0}^\infty PD_2(8n+4)q^{2n+1} \\
&\equiv &  
f_4^4\left( 4q\left( \frac{f_8^2f_{12}^5}{f_2^2f_4f_6^4f_{24}^2}\right)^3\left( \frac{f_4^5f_{24}^2}{f_2^4f_6^2f_8^2f_{12}} \right)\right) \\
&& + f_4^4\left( 4q^3\left( \frac{f_8^2f_{12}^5}{f_2^2f_4f_6^4f_{24}^2}\right)\left( \frac{f_4^5f_{24}^2}{f_2^4f_6^2f_8^2f_{12}} \right)^3\right) + q\frac{f_{12}^4}{f_6^4} \pmod{8} \\
&\equiv &  
4q\frac{f_4^9f_8^6f_{12}^{15}f_{24}^2}{f_2^{10}f_4^3f_6^{14}f_8^2f_{12}f_{24}^6}  + 4q^3 \frac{f_4^{19}f_8^2f_{12}^5f_{24}^6}{f_2^{14}f_4f_6^{10}f_8^6f_{12}^3f_{24}^2} + q\frac{f_{12}^4}{f_6^4} \pmod{8}.
\end{eqnarray*}
This means 
\begin{eqnarray*}
&&
\sum_{n=0}^\infty PD_2(8n+4)q^{n} \\
&\equiv &  
4\frac{f_2^9f_4^6f_{6}^{15}f_{12}^2}{f_1^{10}f_2^3f_3^{14}f_4^2f_6f_{12}^{6}}  + 4q \frac{f_2^{19}f_4^2f_{6}^5f_{12}^6}{f_1^{14}f_2f_3^{10}f_4^6f_{6}^3f_{12}^2} + \frac{f_{6}^4}{f_3^4} \pmod{8} \\
&\equiv & 
4\frac{f_2^9f_2^{12}f_{6}^{15}f_{6}^4}{f_2^{5}f_2^3f_6^{7}f_2^4f_6f_{6}^{12}}  + 4q \frac{f_2^{19}f_2^4f_{6}^5f_{6}^{12}}{f_2^{7}f_2f_6^{5}f_2^{12}f_{6}^3f_{6}^4} + \frac{f_{6}^4}{f_3^4} \pmod{8} \\
&\equiv & 
4\frac{f_2^9}{f_6} + 4qf_2^3f_6^5 + f_6^4\left( \frac{f_{12}^{14}}{f_6^{14}f_{24}^4} + 4q^3 \frac{f_{12}^2f_{24}^4}{f_6^{10}} \right)  \pmod{8}
\end{eqnarray*}
using Lemma \ref{L4} with $q$ replaced by $q^3.$  Retaining only those terms where the powers of $q$ are odd yields 
\begin{eqnarray*}
\sum_{n=0}^\infty PD_2(16n+12)q^{2n+1} 
&\equiv & 
4qf_2^3f_6^5 + 4q^3 \frac{f_6^4f_{12}^2f_{24}^4}{f_6^{10}}   \pmod{8}
\end{eqnarray*}
which means 
\begin{eqnarray*}
\sum_{n=0}^\infty PD_2(16n+12)q^{n} 
&\equiv & 
4f_1^3f_3^5 + 4q \frac{f_{6}^2f_{12}^4}{f_3^{6}}   \pmod{8} \\
&\equiv &
4f_1^2f_1f_3^4f_3 + 4q \frac{f_{6}^2f_{6}^8}{f_6^{3}}   \pmod{8} \\
&\equiv &
4f_1f_2f_3f_6^2 + 4q f_6^7   \pmod{8} \\
&\equiv & 
4f_2f_6^2\left( \frac{f_2f_8^2f_{12}^4}{f_4^2f_6f_{24}^2} - q \frac{f_4^4f_6f_{24}^2}{f_2f_8^2f_{12}^2} \right)  + 4q f_6^7   \pmod{8} \\
&\equiv & 
4\frac{f_2^2f_6f_8^2f_{12}^4}{f_4^2f_{24}^2} +4q \frac{f_4^4f_6^3f_{24}^2}{f_8^2f_{12}^2}  + 4q f_6^7   \pmod{8} \\
\end{eqnarray*}
where in the penultimate line above we have used Lemma \ref{L3}.
This implies that 
\begin{eqnarray*}
\sum_{n=0}^\infty PD_2(32n+28)q^{2n+1} 
&\equiv & 
4q \frac{f_4^4f_6^3f_{24}^2}{f_8^2f_{12}^2}  + 4q f_6^7   \pmod{8} 
\end{eqnarray*}
or 
\begin{eqnarray*}
\sum_{n=0}^\infty PD_2(32n+28)q^{n} 
&\equiv & 
4\frac{f_2^4f_3^3f_{12}^2}{f_4^2f_{6}^2}  + 4 f_3^7   \pmod{8} \\
&\equiv & 
4\frac{f_4^2f_3^3f_{3}^8}{f_4^2f_{3}^4}  + 4 f_3^7   \pmod{8} \\
&\equiv & 
4 f_3^7   + 4 f_3^7   \pmod{8} \\
&=& 
0.
\end{eqnarray*}
The result follows. 
\end{proof}
We now prove one more Ramanujan--like congruence modulo 8 which is the $\alpha=3$ case of Theorem \ref{infinite_family_mod8}.  

\begin{theorem}
\label{64n56_mod8}
For all $n\geq 0,$ $PD_2(64n+56)\equiv 0 \pmod{8}.$    
\end{theorem}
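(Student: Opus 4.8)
The plan is to prove Theorem~\ref{64n56_mod8} by exactly the mechanism used for Theorems~\ref{16n14_mod8} and \ref{32n28_mod8}: iterate the elementary $2$--dissection, working throughout modulo $8$, until the progression $64n+56$ is isolated, and then observe that every surviving term vanishes modulo $8$. Since $64n+56 = 4(16n+14) = 8(8n+7)$, the natural route is the chain
$$PD_2(8k)\ \to\ PD_2(16j+8)\ \to\ PD_2(32i+24)\ \to\ PD_2(64n+56),$$
in which one first passes from $\sum PD_2(4m)q^m$ to $\sum PD_2(8k)q^k$ by taking the \emph{even} part (since $16n+14$ is even) and then applies three successive \emph{odd}--part extractions, using $8n+7=2(4n+3)+1$, then $4n+3=2(2n+1)+1$, then $2n+1$. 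This parallels the chain $PD_2(4n)\to PD_2(8n+4)\to PD_2(16n+12)\to PD_2(32n+28)$ behind the $\alpha=2$ case in Theorem~\ref{32n28_mod8}.

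First I would produce the generating function for $\sum PD_2(8k)q^k$ modulo $8$, which has not yet appeared. Starting from~(\ref{4n_mod8}), use Lemma~\ref{L3} to write $1/(f_1f_3)=A+qB$ with $A=f_8^2f_{12}^5/(f_2^2f_4f_6^4f_{24}^2)$ and $B=f_4^5f_{24}^2/(f_2^4f_6^2f_8^2f_{12})$, both functions of $q^2$; then the summand $q(f_{12}/f_6)^4$ has only exponents of the form $6s+1$ and contributes nothing to the even part, which is therefore $f_4^4(A^4+6q^2A^2B^2+q^4B^4)$. Replacing $q^2$ by $q$ (so that $f_{2r}\mapsto f_r$ throughout) and collapsing high powers with Lemma~\ref{general_congs_result} (for instance $f_1^8\equiv f_2^4$ and $f_3^{16}\equiv f_6^8$) yields the desired congruence for $\sum PD_2(8k)q^k$. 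I would then $2$--dissect it to pull off $\sum PD_2(16j+8)q^j$, dissect again for $\sum PD_2(32i+24)q^i$, and once more for $\sum PD_2(64n+56)q^n$. At each of these three steps the series in hand is not already $2$--dissected, so before separating the odd part I would rewrite the obstructing Euler--product factor via whichever of Lemmas~\ref{L4}, \ref{L12}, \ref{L3} applies; I expect Lemma~\ref{L12}, the lone dissection identity from Section~2 not yet used, to be called on to split an $f_r^2$ factor at one of these stages.

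The main obstacle is organizational stamina rather than any missing idea: each of the four nested dissections multiplies the number of terms, and after each one the result must be massaged — with Lemma~\ref{general_congs_result} to reduce powers of the $f_r$ and with the dissection lemmas to re--separate even and odd parts — back into a form on which the next $2$--dissection can be carried out exactly. The decisive step, just as in the proof of Theorem~\ref{32n28_mod8}, is the last one: I do not expect the terms of $\sum PD_2(64n+56)q^n$ modulo $8$ to be individually divisible by $8$, but rather to each carry a factor of $4$ and to combine into expressions of the shape $4g+4g=8g\equiv 0\pmod 8$, precisely as the two copies of $4f_3^7$ did at the end of Theorem~\ref{32n28_mod8}. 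Checking that this pairwise cancellation really occurs — that no stray term $4g$ with $g$ not itself an even function of $q$ is left over — is the one place the calculation could conceivably fail, so that is where I would concentrate the verification.
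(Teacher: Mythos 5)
Your plan coincides with the paper's actual proof: starting from (\ref{4n_mod8}), it takes the even part to obtain $\sum PD_2(8n)q^n$ modulo $8$ exactly as you describe (the term $q(f_{12}/f_6)^4$ indeed contributing nothing), then performs successive odd-part extractions through $16n+8$ and $32n+24$, calling on Lemma~\ref{L12} precisely where you predicted, namely to split the $(f_1^2)^2$ factor in $\sum PD_2(16n+8)q^n \equiv 6f_1^6f_3^2 \pmod 8$. The only cosmetic difference is the finish: rather than a $4g+4g$ cancellation as in Theorem~\ref{32n28_mod8}, the paper arrives at $\sum PD_2(32n+24)q^{n} \equiv 4f_{6}^2f_{8}/f_{2} \pmod 8$, an even function of $q$, so the odd-indexed coefficients $PD_2(64n+56)$ vanish --- the alternative outcome your last paragraph explicitly allows for.
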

\begin{proof}
Thanks to (\ref{4n_mod8}), we see that
\begin{eqnarray*}
\sum_{n=0}^\infty PD_2(8n)q^{2n} 
&\equiv & 
% f_4^4\left( \left(\frac{f_8^2f_{12}^5}{f_2^2f_4f_6^4f_{24}^2}    \right)^4 +6 \left(\frac{f_8^2f_{12}^5}{f_2^2f_4f_6^4f_{24}^2} \right)^2\left(\frac{qf_4^5f_{24}^2}{f_2^4f_6^2f_8^2f_{12}} \right)^2  + \left( {qf_4^5f_{24}^2}{f_2^4f_6^2f_8^2f_{12}} \right)^4 \right) \pmod{8} \\
% &\equiv & 
\frac{f_4^4f_8^8f_{12}^{20}}{f_2^8f_4^4f_6^{16}f_{24}^8} + 6q^2\frac{f_4^4f_8^4f_{12}^{10}}{f_2^4f_4^2f_6^8f_{24}^4} \cdot \frac{f_4^{10}f_{24}^4}{f_2^8f_6^4f_8^4f_{12}^2} \\
&\ \ \ & + q^4\frac{f_4^{4}f_4^{20}f_{24}^8}{f_2^{16}f_6^8f_8^8f_{12}^4} \pmod{8}.
\end{eqnarray*}
Thus, 
\begin{eqnarray}
\sum_{n=0}^\infty PD_2(8n)q^{n} 
&\equiv & 
\frac{f_4^8f_{6}^{20}}{f_1^8f_3^{16}f_{12}^8} + 6q\frac{f_2^{14}f_{6}^{10}}{f_1^{12}f_2^2f_3^{12}f_{6}^2} +q^2\frac{f_2^{24}f_{12}^8}{f_1^{16}f_3^8f_4^8f_{6}^4} \pmod{8} \notag \\
&\equiv & 
\frac{f_8^4f_{6}^{20}}{f_2^4f_6^{8}f_{6}^{16}} + 6q\frac{f_2^{14}f_{6}^{10}}{f_2^{6}f_2^2f_6^{6}f_{6}^2} +q^2\frac{f_2^{24}f_{12}^8}{f_2^{8}f_6^4f_4^8f_{6}^4} \pmod{8} \notag \\
&\equiv & 
\frac{f_8^4}{f_2^4f_6^{4}} + 6q\frac{f_2^{14}f_{6}^{10}}{f_2^{8}f_6^{8}} +q^2\frac{f_2^{24}f_{24}^4}{f_2^{8}f_{12}^4f_4^8} \pmod{8}. \label{8n_mod8}
\end{eqnarray}
This means that 
\begin{eqnarray*}
\sum_{n=0}^\infty PD_2(16n+8)q^{2n+1} 
&\equiv & 
6q\frac{f_2^{14}f_{6}^{10}}{f_2^{8}f_6^{8}}  \pmod{8} \\
&\equiv & 
6q f_2^6f_6^2 \pmod{8}.
\end{eqnarray*}
Dividing both sides of this congruence by $q$ and replacing $q^2$ by $q$ yields 
\begin{eqnarray*}
\sum_{n=0}^\infty PD_2(16n+8)q^{n} 
&\equiv & 
6 f_1^6f_3^2 \pmod{8} \\
&= & 
6 (f_1^2)^2(f_1f_3)^2.
\end{eqnarray*}
We can then apply Lemma \ref{L12} and Lemma \ref{L3} to get 
\begin{eqnarray*}
&& \sum_{n=0}^\infty PD_2(16n+8)q^{n} \\
\ \ \ \ &\equiv & 
6 (f_1^2)^2(f_1f_3)^2 \pmod{8} \\
\ \ \ \ &\equiv & 
6\left( \frac{f_2f_8^5}{f_4^2f_{16}^2} -2q\frac{f_2f_{16}^2}{f_8}\right)^2 \left( \frac{f_2f_8^2f_{12}^4}{f_4^2f_6f_{24}^2} - q \frac{f_4^4f_6f_{24}^2}{f_2f_8^2f_{12}^2}\right)^2 \pmod{8} \\
\ \ \ \ &\equiv & 
6\left( \frac{f_2f_8^5}{f_4^2f_{16}^2} \right)^2 \left( \frac{f_2^2f_8^4f_{12}^8}{f_4^4f_6^2f_{24}^4} - 2q \frac{f_2f_8^2f_{12}^4}{f_4^2f_6f_{24}^2}\cdot \frac{f_4^4f_6f_{24}^2}{f_2f_8^2f_{12}^2} + q^2\frac{f_4^8f_6^2f_{24}^4}{f_2^2f_8^4f_{12}^4}\right) \pmod{8}.
\end{eqnarray*}
Therefore, 
\begin{eqnarray*}
\sum_{n=0}^\infty PD_2(32n+24)q^{2n+1} 
&\equiv & 
-12q\left( \frac{f_2f_8^5}{f_4^2f_{16}^2} \right)^2 \frac{f_2f_8^2f_{12}^4}{f_4^2f_6f_{24}^2}\cdot \frac{f_4^4f_6f_{24}^2}{f_2f_8^2f_{12}^2}  \pmod{8} \\
&\equiv & 
4q\frac{f_2^2f_8^{10}f_{12}^4}{f_{4}^2f_{12}^2f_{16}^4}  \pmod{8} \\
&\equiv & 
4q\frac{f_{12}^2f_{16}}{f_{4}}  \pmod{8} 
\end{eqnarray*}
after simplification.  Therefore, 
$$\sum_{n=0}^\infty PD_2(32n+24)q^{2n+1} 
\equiv 
4\frac{f_{6}^2f_{8}}{f_{2}}  \pmod{8}.
$$
Because the above is an even function of $q,$ we know that, for all $n\geq 0,$ $$PD_2(32(2n+1)+24) = PD_2(64n+56) \equiv 0 \pmod{8}.$$
\end{proof}
The last item that we need in order to complete the proof of Theorem \ref{infinite_family_mod8} is the following internal congruence.  
\begin{theorem}
\label{internal_cong_mod8}
For all $n\geq 0,$ $PD_2(16n) \equiv PD_2(4n) \pmod{8}.$
\end{theorem}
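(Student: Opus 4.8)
The plan is to compare the generating functions of $PD_2(16n)$ and $PD_2(4n)$ modulo $8$ term by term, using the $2$-dissection machinery already in place. The two starting ingredients are equation (\ref{8n_mod8}), which gives
$$\sum_{n=0}^\infty PD_2(8n)q^{n} \equiv \frac{f_8^4}{f_2^4f_6^{4}} + 6q\,f_2^{6}f_{6}^{2} +q^2\frac{f_2^{16}f_{24}^4}{f_{4}^8f_{12}^4} \pmod{8},$$
and equation (\ref{4n_mod8_overall}), which states that $\sum_{n\ge 0} PD_2(4n)q^{n}\equiv (f_4/f_1f_3)^4 + q(f_{12}/f_6)^4\pmod 8$.

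First I would extract from the displayed expression for $\sum PD_2(8n)q^n$ the subseries of terms of even degree, since $PD_2(16n)$ is the coefficient of $q^{2n}$ there. Each of $f_2,f_4,f_6,f_8,f_{12},f_{24}$ is a power series in $q^2$, so the first and third summands above are even functions of $q$ while the middle summand $6q\,f_2^6f_6^2$ is an odd function of $q$; hence the middle summand contributes nothing to $PD_2(16n)$ (it is exactly the part that was used for $PD_2(16n+8)$ in the proof of Theorem \ref{64n56_mod8}). Retaining the even part and replacing $q^2$ by $q$ should give
$$\sum_{n=0}^\infty PD_2(16n)q^{n} \equiv \frac{f_4^4}{f_1^4f_3^{4}} + q\,\frac{f_1^{16}f_{12}^4}{f_{2}^8f_{6}^4} \pmod{8}.$$

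Next I would simplify the coefficient $f_1^{16}/f_2^8$ by invoking Lemma \ref{general_congs_result} with $p=2$, $j=3$, $m=1$, and $k=2$, which yields $f_1^{16}\equiv f_2^8\pmod 8$, hence $f_1^{16}/f_2^8\equiv 1\pmod 8$. Substituting this back turns the right-hand side into $\frac{f_4^4}{f_1^4f_3^4}+q\frac{f_{12}^4}{f_6^4}$, which is precisely $\sum_{n\ge 0}PD_2(4n)q^n\pmod 8$ by (\ref{4n_mod8_overall}); comparing coefficients of $q^n$ then gives $PD_2(16n)\equiv PD_2(4n)\pmod 8$ for all $n\ge 0$.

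I do not expect a serious obstacle here, since every ingredient has already been assembled and the whole argument is bookkeeping on the $2$-dissection in (\ref{8n_mod8}). The two points that require a little attention are recognizing that the $6q\,f_2^6f_6^2$ term is an odd function of $q$ and therefore irrelevant to $PD_2(16n)$, and noticing that the apparently unwieldy factor $f_1^{16}/f_2^8$ collapses to $1$ modulo $8$ — this collapse is exactly what makes the generating functions for $PD_2(16n)$ and $PD_2(4n)$ agree.
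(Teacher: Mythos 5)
Your proposal is correct and follows essentially the same route as the paper: extract the even part of the $2$-dissection in (\ref{8n_mod8}), discard the odd middle term, and collapse the leftover eta-quotient factor to $1$ modulo $8$ via Lemma \ref{general_congs_result} before matching against (\ref{4n_mod8_overall}). The only cosmetic difference is that the paper applies the reduction $f_4^8\equiv f_2^{16}\pmod 8$ before halving the subscripts, whereas you halve first and then use $f_1^{16}\equiv f_2^8\pmod 8$; these are the same application of the lemma.
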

\begin{proof}
Thanks to (\ref{8n_mod8}), we see that 
\begin{eqnarray*}
\sum_{n=0}^\infty PD_2(16n)q^{2n} 
&\equiv & 
\frac{f_8^4}{f_2^4f_6^{4}} +q^2\frac{f_2^{24}f_{24}^4}{f_2^{8}f_{12}^4f_4^8} \pmod{8} \\
&\equiv & 
\frac{f_8^4}{f_2^4f_6^{4}} +q^2\frac{f_2^{24}f_{24}^4}{f_2^{8}f_{12}^4f_2^{16}} \pmod{8} \\
&\equiv & 
\frac{f_8^4}{f_2^4f_6^{4}} +q^2\frac{f_{24}^4}{f_{12}^4} \pmod{8} 
\end{eqnarray*}
which means 
\begin{eqnarray*}
\sum_{n=0}^\infty PD_2(16n)q^{n} 
&\equiv & 
\frac{f_4^4}{f_1^4f_3^{4}} +q\frac{f_{12}^4}{f_{6}^4} \pmod{8} \\
&\equiv & 
\sum_{n=0}^\infty PD_2(4n)q^{n}  \textrm{\ \ thanks to (\ref{4n_mod8_overall})}.
\end{eqnarray*}
\end{proof}

%%%%%%%%%%%%%%%%%
%%%%%%%%%%%%%%%%%

We now have all of the tools needed to provide a proof of Theorem \ref{infinite_family_mod8}.  
\begin{proof}
(of Theorem \ref{infinite_family_mod8})  Note that the $\alpha=0, 1, 2, 3$ cases are proven in Theorems \ref{8n7_mod8}, \ref{16n14_mod8},  \ref{32n28_mod8}, and \ref{64n56_mod8} respectively.   We then see that, thanks to Theorems \ref{32n28_mod8} and \ref{internal_cong_mod8}, and the fact that $32n+28 = 4(8n+7)$, we have (mod 8)
$$
0 \equiv PD_2(4(8n+7)) \equiv PD_2(4^2(8n+7)) \equiv PD_2(4^3(8n+7)) \dots
$$
which takes care of all of the cases with an even power of $\alpha.$  For the cases where $\alpha$ is odd, we use Theorems \ref{64n56_mod8} and \ref{internal_cong_mod8} and the fact that  
$$64n+56 = 4(2(8n+7))$$ to yield (mod 8) 
$$
0 \equiv PD_2(4(2(8n+7))) \equiv PD_2(4^2(2(8n+7))) \equiv  PD_2(4^3(2(8n+7))) \dots
$$
\end{proof}

\section{Closing Thoughts}
We close by providing an elementary proof of the last congruence in Conjecture \ref{Herden_conjectures}:

\begin{theorem}
For all $n\geq 0,$ $PD_2(48n+26) \equiv 0 \pmod{4}.$    
\end{theorem}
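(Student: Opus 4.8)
The plan is to reduce the congruence to a coefficient-extraction statement modulo $2$ and then to observe that, after simplification, the relevant power series is supported only on exponents divisible by $3$, so that the progression $48n+26$ contributes nothing.

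First I would write $48n+26 = 4(12n+6)+2$ and apply the second congruence of Theorem \ref{4n_mod4}, namely $\sum_{m\ge0}PD_2(4m+2)q^m\equiv 2f_2^3f_6\pmod 4$. This gives $PD_2(48n+26)\equiv 2\,[q^{12n+6}]\bigl(f_2^3f_6\bigr)\pmod 4$, so it suffices to show that $[q^{12n+6}]\bigl(f_2^3f_6\bigr)$ is even. Since $f_2^3f_6$ is $f_1^3f_3$ with $q$ replaced by $q^2$, this is equivalent to the claim that $[q^{6n+3}]\bigl(f_1^3f_3\bigr)\equiv 0\pmod 2$ for every $n\ge 0$.

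To prove the claim I would work modulo $2$ throughout. From $f_1^2\equiv f_2\pmod 2$ (the $p=2$ case of Lemma \ref{general_congs_result}) we get $f_1^3f_3\equiv f_1f_2f_3\pmod 2$. Applying the second $2$-dissection in Lemma \ref{L3} to $f_1f_3$ and multiplying through by $f_2$ yields
$$f_1f_2f_3 \;=\; \frac{f_2^2f_8^2f_{12}^4}{f_4^2f_6f_{24}^2} \;-\; q\,\frac{f_4^4f_6f_{24}^2}{f_8^2f_{12}^2}.$$
The first summand involves only even indices, hence is a power series in $q^2$ and contributes nothing to the coefficient of the odd power $q^{6n+3}$; the second summand is $q$ times a power series in $q^2$, so extracting $q^{6n+3}$ and then replacing $q^2$ by $q$ shows that $[q^{6n+3}]\bigl(f_1^3f_3\bigr)\equiv [q^{3n+1}]\dfrac{f_2^4f_3f_{12}^2}{f_4^2f_6^2}\pmod 2$, the sign being immaterial modulo $2$.

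Finally, the congruences $f_2^4\equiv f_4^2$ and $f_6^2\equiv f_{12}$ modulo $2$ (again Lemma \ref{general_congs_result}) collapse the eta-quotient to $\dfrac{f_2^4f_3f_{12}^2}{f_4^2f_6^2}\equiv f_3f_{12}\pmod 2$. Since $f_3f_{12}$ is a power series in $q^3$, its coefficient of $q^{3n+1}$ is $0$, so $[q^{6n+3}]\bigl(f_1^3f_3\bigr)\equiv 0\pmod 2$ and therefore $PD_2(48n+26)\equiv 0\pmod 4$. The dissection bookkeeping right after Lemma \ref{L3} is the only slightly tedious part; the step I expect to be the heart of the argument is the final mod-$2$ simplification, which forces the generating function into a residue class that the progression $48n+26$ simply does not meet.
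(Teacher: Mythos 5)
Your argument is correct and follows essentially the same route as the paper: both start from $\sum_{m\geq 0}PD_2(4m+2)q^m\equiv 2f_2^3f_6\pmod 4$ (Theorem \ref{4n_mod4}), reduce to the series $f_1^3f_3$, dissect via Lemma \ref{L3}, and finish by landing on a series supported on multiples of $3$ that misses the residue class in question. The one genuine difference is in how you handle the factor $f_1^2$: the paper keeps the factor of $2$ in place, works modulo $4$, and invokes the exact $2$-dissection of $f_1^2$ from Lemma \ref{L12}, whereas you divide out the $2$, pass to a mod-$2$ computation, and simply replace $f_1^2$ by $f_2$ via Lemma \ref{general_congs_result}. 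This makes your version slightly leaner (Lemma \ref{L12} is not needed at all), and the two endpoints agree, since your $f_3f_{12}$ is exactly the paper's $f_3f_6^2$ reduced modulo $2$. The coefficient-extraction bookkeeping ($48n+26=4(12n+6)+2$, then $[q^{12n+6}]f_2^3f_6=[q^{6n+3}]f_1^3f_3$, then the odd-part extraction) all checks out.
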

\begin{proof}
From Theorem \ref{4n_mod4}, we know 
$$
\sum_{n=0}^{\infty} PD_2(4n+2)q^n  \equiv 2f_2^3f_6 \pmod{4}. 
$$ 
Thus, we see that 
$$
\sum_{n=0}^{\infty} PD_2(8n+2)q^n  \equiv 2f_1^3f_3 \pmod{4}.
$$ 
Using Lemmas \ref{L12} and \ref{L3}, we then have 
\begin{eqnarray*}
\sum_{n=0}^{\infty} PD_2(8n+2)q^n  
&\equiv & 
2f_1^3f_3 \pmod{4} \\
&= &
2(f_1f_3)f_1^2 \\
&=& 
2\left( \frac{f_2f_8^2f_{12}^4}{f_4^2f_6f_{24}^2} - q \frac{f_4^4f_6f_{24}^2}{f_2f_8^2f_{12}^2} \right)\left( \frac{f_2f_8^5}{f_4^2f_{16}^2} - 2q \frac{f_2f_{16}^2}{f_8} \right)^2 \\
&\equiv & 
2\left( \frac{f_2f_8^2f_{12}^4}{f_4^2f_6f_{24}^2} - q \frac{f_4^4f_6f_{24}^2}{f_2f_8^2f_{12}^2} \right)\left( \frac{f_2f_8^5}{f_4^2f_{16}^2}\right)^2 \pmod{4} \\
&\equiv & 
2\frac{f_2^2f_8^7f_{12}^4}{f_4^4f_6f_{24}^2} - 2q \frac{f_2f_4^4f_6f_8^5f_{24}^2}{f_2f_4^2f_8^2f_{12}^2f_{16}^2}  \pmod{4}.
\end{eqnarray*}
This implies 
\begin{eqnarray*}
\sum_{n=0}^{\infty} PD_2(16n+10)q^{2n+1}  
&\equiv & 
2q \frac{f_2f_4^4f_6f_8^5f_{24}^2}{f_2f_4^2f_8^2f_{12}^2f_{16}^2}  \pmod{4}
\end{eqnarray*}
or 
\begin{eqnarray*}
\sum_{n=0}^{\infty} PD_2(16n+10)q^{n}  
&\equiv & 
2\frac{f_2^4f_3f_4^5f_{12}^2}{f_2^2f_4^2f_{6}^2f_{8}^2}  \pmod{4} \\
&\equiv & 
2\frac{f_2^4f_3f_2^{10}f_{6}^4}{f_2^2f_2^4f_{6}^2f_{2}^8}  \pmod{4} \\
&\equiv & 
2\frac{f_3f_{6}^4}{f_{6}^2}  \pmod{4} \\
&=& 
2f_3f_6^2.
\end{eqnarray*}
Since the last function above is a function of $q^3,$ we know that, for all $n\geq 0,$
$$PD_2(16(3n+1)+10) = PD_2(48n+26) \equiv 0 \pmod{4}.$$
\end{proof}

%%%%%%%%%%%%%%%%%%%%%%%%%%%%%%%%%%% 

%%%%%%%%%%%%%%%%%%%%%%%%%%%%%%%%%%%%%%%%%%%%
%%%%%%%%%%%%%%%%%%%%%%%%%%%%%%%%%%%%%%%%%%%%
%%%%%%%%%%%%%%%%%%%%%%%%%%%%%%%%%%%%%%%%%%%%
%%%%%%%%%%%%%%%%%%%%%%%%%%%%%%%%%%%%%%%%%%%%
%%%%%%%%%%%%%%%%%%%%%%%%%%%%%%%%%%%%%%%%%%%%
%%%%%%%%%%%%%%%%%%%%%%%%%%%%%%%%%%%%%%%%%%%%
%%%%%%%%%%%%%%%%%%%%%%%%%%%%%%%%%%%%%%%%%%%%
%%%%%%%%%%%%%%%%%%%%%%%%%%%%%%%%%%%%%%%%%%%%

\end{document}